\newtheorem{thm}{Theorem}[section]
\newtheorem{corollary}[thm]{Corollary}
\newtheorem{lemma}[thm]{Lemma}
\newtheorem{proposition}[thm]{Proposition}
\theoremstyle{definition}
\newtheorem{definition}[thm]{Definition}
\newtheorem{example}[thm]{Example}
\theoremstyle{remark}
\def\1{{\rm (1)}}
\def\2{{\rm (2)}}
\def\3{{\rm (3)}}
\def\4{{\rm (4)}}
\def\5{{\rm (5)}}
  \newcounter{xenumi}
\begin{document}

\title[Finitely Star Regular Domains]{Finitely Star Regular Domains $^{(\star)}$}
\thanks{$^{(\star)}$ Supported by KFUPM under DSR Research Grant \#: RG161001.}


\author[A. Mimouni]{A. Mimouni}
\address{Department of Mathematics, KFUPM, Dhahran 31261, KSA}
\email{amimouni@kfupm.edu.sa}

\date{\today}

\subjclass[2010]{13A15, 13A18, 13F05, 13G05, 13C20}


\begin{abstract}
 Let $R$ be an integral domain, $Star(R)$ the set of all star operations on $R$ and $StarFC(R)$ the set of all star operations of finite type on $R$. Then $R$ is said to be star regular if $|Star(T)|\leq |Star(R)|$ for every overring $T$ of $R$.
In this paper we introduce the notion of finitely star regular domain as an integral domain $R$ such that $|StarFC(T)|\leq |StarFC(R)|$ for each overring $T$ of $R$. First, we show that the notions of star regular and finitely star regular domains are completely different and do not imply each other. Next, we extend/generalize well-known results on star regularity in Noetherian and Pr\"ufer contexts to  finitely star regularity. Also we handle the finite star regular domains issued from classical pullback constructions to  construct finitely star regular domains that are not star regular and enriches the literature with a such class of domains.
\end{abstract}
\maketitle

\section{Introduction}

\noindent Let $R$ be an integral domain with quotient field $L$, $F(R)$ the set of nonzero fractional ideals of $R$ and $f(R)$ the set of nonzero finitely generated fractional ideals of $R$.
A mapping $*: F(R)\rightarrow F(R)$,
$E\mapsto E^*$, is called a {star operation} on $R$ if the
following conditions hold for all $a\in L\setminus \{0\}$ and $E,
F\in F(R)$:
\begin{enumerate}
\item[(I)] $(aE)^*=aE^*$; \item[(II)] $E\subseteq E^*$;\, if
$E\subseteq F$, then $E^*\subseteq F^*$;\, and \item[(III)]
$(E^*)^*=E^*$.
\end{enumerate}
The simplest star operations are the $d$-operation defined by $E^{d}=E$ for every $E\in F(R)$, and the $v$-operation defined by $E^{v}=(E^{-1})^{-1}$ (where $E^{-1}=(R:E)=\{x\in L|xE\subseteq R\}$) for every $E\in F(R)$. A star operation $*$ is said to be of finite type (or of finite character) if for each nonzero (fractional) ideal $E$ of $R$, $E^{*}=\bigcup F^{*}$ where the union is taken over all nonzero finitely generated subideals $F$ of $E$. Also a star operation is stable if $(E\cap F)^{*}=E^{*}\cap F^{*}$ for each $E, F\in F(R)$. To any star operation $*$ on $R$, we associate a star operation of finite type $*_{f}$ and a stable star operation of finite type $*_{w}$ by setting respectively $E^{*_{f}}=\bigcup \{F^{*}| F\in f(R), F\subseteq E\}$  and $E^{*_{w}}=\bigcup\{(E:F)|F\in f(R), F^{*}=R\}$. Notice that $v_{f}=t$ and $t_{w}=w$. For star operations $*$ and $*'$ on $R$, $*\leq *'$ provided that $E^{*}\subseteq E^{*'}$ for every $E\in F(R)$. Clearly $d\leq w\leq t\leq v$ and for every star operation $*$ on $R$, $d\leq *\leq v$ and $d\leq *_{f}\leq t$. We denote by $\text{Star}(R)$ the set of all star operations on $R$ and $StarFC(R)$ the set of all star operations of finite type on $R$.\\ 
Recently, motivated by well-known characterizations of integrally closed and Noetherian divisorial domains \cite{Hei, m1}, the author of this paper, together with E. Houston and M. H. Park, started a long and deep study of some ring-theoretic properties of integral domains having only finitely many star operations
in different contexts of integral domains. Namely, complete characterizations are given in the cases of local Notherian domains with infinite residue field and integrally closed domains see \cite{hmp1, hmp2, hmp3, hmp4, hmp5} (see also \cite{C, hp, hlp, Par1}, and \cite{Sp1, Sp2}).\\
In \cite{hmp4}, the authors studied, for a Noetherian domain $R$, how $|Star(R)|$ affects $|Star(T)|$ for each proper overing $T$ of $R$ with the emphasis on the case where $Star(R)$ is finite. They introduced the notion of a star regular domain as a domain $R$ such that $|Star(T)|\leq |Star(R)|$ for each overring $T$ of $R$. Notice that a Noetherian domain $R$ (which is not a field) with finitely many star operations has Krull dimension one. The authors constructed a Noetherian domain $R$ with $|Star(R)|=1$ (equivalently, $R$ is a divisorial domain), but having an overring $T$ with $|Star(T)|=\infty$. Next, they showed that for a one-dimensional Noetherian domain $R$, if $R$ is locally star regular, then it is star regular, and the converse holds if $Star(R)$ is finte. They conjectured that ``if $R$ is a local Noetherian domain with $1< |Star(R)|< \infty$, then $R$ is star regular'', and proved that this conjecture holds if $R$ has infinite residue field. They also considered the question of whether finiteness of $Star(T)$ for each proper overring of a Noetherian domain $R$ implies finiteness of $Star(R)$, and showed that this occurred when R is non-local.\\
In \cite{hmp5}, the authors investigated star regular domains in the context of Pr\"ufer domains. They showed that star regularity for Pr\"ufer domains with only finitely many star operations reduces to star regularity of Pr\"ufer domains $R$ possessing a nonzero
prime ideal $P$ contained in the Jacobson radical of $R$ such that $Spec(R/P)$ is finite (\cite[Theorem 3.1]{hmp5}).  More precisely, they proved that if $R$ is a semi-local Pr\"ufer domain with more than one maximal ideal such
that $R/P$ is strongly discrete (where $P$ is the largest prime ideal contained in the Jacobson radical of $R$), then $R$ is star regular (\cite[Theorem 3.11]{hmp5}).\\
In \cite{KadMi}, the authors investigated some ring-theoretic properties of certain classes of integral domains with only finitely many star operations of finite type. Several generalizations/analogues of well-known results on integral domains with finitely many star operations were extended to integral domains with finitely many star operations of finite type. Namely in Noetherian-like settings such us Mori domains, pullback constructions and more.\\

The purpose of this paper is to introduce and study the notion of finitely star regular domains, that is, integral domains $R$ such that $|StarFC(T)|\leq |StarFC(R)|$ for each overring $T$ of $R$. The class of finitely star regular domains includes the class of Pr\"ufer domains (since $|StarFC(R)|=1$ for every Pr\"ufer domain $R$), and coincides with the class of star domains in the Neotherian context. Motivated by the fact that Pr\"ufer domains are finitely star regular domains, but not always star regular, in section 2, we start by showing that the two notions of star and finitely star regular domains are completely different and do not imply each other, (see Example~\ref{fsr-not-sr.3}, Example~\ref{fsr-not-sr.2}, and Example~\ref{fsr-not-sr.1}). Next, we extend \cite[Theorem 2.3]{hmp2} to a one-dimensional domain of finite character (i.e., every nonzero non-unit element is contained in a finitely many maximal ideals). That is, for a one-dimensional domain $R$ of finite character such that $StarFC(R)$ is finite,  $|StarFC(R)|=\displaystyle\prod_{M\in Max(R)}|StarFC(R_{M})|$ (Theorem~\ref{Th2}). The second main theorem asserts that  if $R$ is a one-dimensional quasi-Pr\"ufer domain (i.e. its integral closure is a Pr\"ufer domain) such that $StarFC(R)$ is finite, then $R$ is finitely star regular if and only if $R_{M}$ is finitely star regular for every maximal ideal $M$ of $R$ (Theorem~\ref{Th3}). The third section deals with certain pullbacks that are finitely star regular. The main result asserts that a large class of integral domains related to valuation domains, namely the class of pseudo-valuation domains ($PVD$ for short), are always finitely star regular, but not star regular in general. Next, we deal with the classical pullbacks issued from valuation domains in order to enriches the literature with such a class of integral domains.\\

Throughout, $R$ denotes an integral domain (which is not a field), $R'$ its integral closure and $\overline{R}$ its complete integral closure. The set of all maximal ideals of $R$ is denoted by $Max(R)$, and if $S$ is an overring of $R$, $[R, S]$ denotes the set of all intermediate rings between $R$ and $S$ (i.e. rings $T$ such that $R\subseteq T\subseteq S$).
\section{General Results}

Recall that an integral domain $R$ is said to be a star regular domain if $|Star(T)|\leq |Star(R)|$ for every overring of $R$. Next, we introduce the notion of finitely star regular domains.

\begin{definition} Let $R$ be an integral domain. We say that $R$ is finitely star regular if $|StarFC(T)|\leq |StarFC(R)|$ for each overring $T$ of $R$.
\end{definition}
The next proposition deals with the class of integrally closed domains and shows that it is an important class of finitely star regular domains that are not necessarily star regular. 
\begin{proposition} Let $R$ be an integrally closed domain. Then $R$ is finitely star regular.
\end{proposition}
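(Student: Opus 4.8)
The plan is to reduce the statement to the following assertion: for every overring $T$ of $R$ there is an injection of $StarFC(T)$ into $StarFC(R)$. A cardinality count then yields $|StarFC(T)|\le|StarFC(R)|$ for every overring $T$, which is exactly finite star regularity.

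The case where $R$ is a Pr\"ufer domain is immediate: every overring $T$ of $R$ is then again Pr\"ufer, so $|StarFC(T)|=1=|StarFC(R)|$ (this is also why every Pr\"ufer domain is finitely star regular). So the content lies in the case where $R$ is integrally closed but not Pr\"ufer, and here the natural tool is the $b$-operation. Since $R$ is integrally closed, $R=\bigcap\{V\mid V\ \text{a valuation overring of}\ R\}$, so $E\mapsto E^{b}:=\bigcap_{V}EV$ is a star operation of finite type on $R$; moreover the valuation overrings of an overring $T$ of $R$ are precisely the valuation overrings of $R$ that contain $T$. Given $\star\in StarFC(T)$, I would attach to it the map $\star'$ on $R$ defined on $f(R)$ by $E^{\star'}=E^{b}\cap (ET)^{\star}$ and extended to all nonzero fractional ideals by $E^{\star'}=\bigcup\{F^{\star'}\mid F\in f(R),\ F\subseteq E\}$. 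That $\star'$ is a star operation of finite type on $R$ is routine: conditions (I) and (II) are clear; $R^{\star'}=R^{b}\cap T^{\star}=R\cap T=R$ since $R$ is integrally closed; and idempotence follows from the facts that $(ET)^{\star}$ is a $T$-submodule of the quotient field and that $F\subseteq E^{\star'}$ forces both $F^{b}\subseteq E^{b}$ and $(FT)^{\star}\subseteq (ET)^{\star}$.

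The step I expect to be the main obstacle is the injectivity of $\star\mapsto\star'$: from $E^{b}\cap(ET)^{\star_{1}}=E^{b}\cap(ET)^{\star_{2}}$ for all $E\in f(R)$ one must recover $J^{\star_{1}}=J^{\star_{2}}$ for every $J\in f(T)$, and this is not formal because $(ET)^{\star}$ need not lie inside $E^{b}$. Overcoming it is precisely where integral closedness of $R$ has to be exploited: the idea is to split $E^{b}=\bigl(\bigcap_{V\supseteq T}EV\bigr)\cap\bigl(\bigcap_{V\not\supseteq T}EV\bigr)$ and to arrange that the first factor carries the full information of $\star$ on the valuation overrings of $T$ while the second factor is what pins $R^{\star'}$ down to $R$. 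Equivalently, one can pass through the correspondence between $StarFC(T)$ and the semistar operations of finite type $\sharp$ on $R$ with $R^{\sharp}=T$ --- such a $\sharp$ restricts to a star operation of finite type on $T$ and conversely --- and realize each such $\sharp$ faithfully inside $StarFC(R)$ by intersecting it with a $b$-type closure built from the valuation overrings of $R$ not containing $T$. I would expect checking that this refined assignment lands in $StarFC(R)$ and is injective to be the technical heart of the argument.
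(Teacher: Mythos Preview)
The paper's proof is a two-line dichotomy that you have entirely bypassed. If $|StarFC(R)|=\infty$ there is nothing to prove (this is the convention used throughout the paper: once the base set is infinite, the inequality $|StarFC(T)|\le|StarFC(R)|$ is taken to hold automatically). If instead $StarFC(R)$ is finite, one invokes \cite[Theorem~3.1]{hmp3}: an integrally closed domain with only finitely many star operations of finite type is already Pr\"ufer. Hence every overring $T$ is Pr\"ufer and $|StarFC(T)|=1=|StarFC(R)|$. That is the whole argument; the ``non-Pr\"ufer integrally closed'' case you spend most of your effort on is vacuous in the finite situation.

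Your route is far more ambitious and, by your own admission, incomplete. The assignment $\star\mapsto\delta(\star,b)$ does land in $StarFC(R)$ (this is exactly Lemma~\ref{L1} with $*=b$, which is a finite-type star operation because $R$ is integrally closed), so that part is fine. But injectivity is a genuine gap: from $E^{b}\cap(ET)^{\star_{1}}=E^{b}\cap(ET)^{\star_{2}}$ for all $E\in f(R)$ there is no formal way to recover $\star_{1}=\star_{2}$, precisely because $(ET)^{\star}$ need not lie inside $E^{b}$. Your proposed repair---splitting the valuation overrings into those containing $T$ and those not, or recasting via semistar operations with $R^{\sharp}=T$---is a plan, not a proof, and you do not carry it out. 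Even if it could be completed, it would establish a strictly stronger statement (a literal injection of sets) than the proposition requires. The key idea you are missing is simply the cited structural theorem from \cite{hmp3}, which collapses the problem.
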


\begin{proof} Assume that $R$ is integrally closed. If $|StarFC(R)|=\infty$, we are done. Assume that $StarFC(R)$ is finite. Then by \cite[Theorem 3.1]{hmp3}, $R$ is a Pr\"ufer domain. Thus every overring $T$ of $R$ is Pr\"ufer and so $|StarFC(T)|=|StarFC(R)|=1$, as desired.
\end{proof}
Our next three examples show that the notions of star regular domain and finitely star regular domain do not implies each other. The first one is an example of a finitely star regular domain $R$ with $Star(T)$ finite for every overring $T$ of $R$, but which is not star regular (as it has an overring $T$ with $|Star(R)|<|Star(T)|$). The second one is an example of a star regular domain which is not finitely star regular with $StarFC(R)$ finite but $Star(R)$ is infinite. In the third example, $R$ is finitely star regular but not star regular with $Star(R)$ finite and having an overring $T$ with $|Star(T)|=\infty$. Notice that if $V$ is a valuation domain, then $|Star(V)|\leq 2$.
\begin{example}\label{fsr-not-sr.3} Let $V$ be a valuation domain with a principal maximal ideal $M$ and suppose that $V$ has a nonzero non-maximal ideal $P$ such that $P=P^{2}$. Clearly for every overring $T$ of $V$, $|StarFC(T)|=|StarFC(V)|=1$. Thus $V$ is a finitely star regular domain. However,  $|Star(V_{P})|=2$ while $|Star(V)|=1$. Hence $V$ is not star regular.
\end{example}
 \begin{example}\label{fsr-not-sr.2} The following is an example of a star regular domain which is not finitely star regular. Let $k$ be an infinite field, $X$ and $Y$ indeterminates over $k$ and set $D=k[X^{3}, X^{7}]$ and $D_{1}=k[X^{3}, X^{7}, X^{8}]$. By \cite[Theorem 2.2]{M2}, $D$ is a divisorial Noetherian domain and so $|Star(D)|=|StarFC(D)|=1$, while $D_{1}$ is a Noetherian overring of $D$ with $|Star(D_{1})|=|StarFC(D_{1})|=\infty$. Indeed, let $M_{1}=(X^{3}, X^{7}, X^{8})$. Then $M_{1}$ is a maximal ideal of $D_{1}$ with $(D_{1}:M_{1})=k[X^{3}, X^{4}, X^{5}]$ and so $(D_{1})_{M_{1}}$ is a Noetherian local domain satisfying the conditions of \cite[Theorem 3.9]{hmp2}. Thus $|StarFC((D_{1})_{M_{1}})|=|Star((D_{1})_{M_{1}})|=\infty$. By \cite[Theorem 2.3]{hmp2}, $|StarFC(D_{1})|=|Star(D_{1})|=\infty$. Now set $V=k(X)[[Y]]=k(X)+Yk(X)[[Y]]$, $R=D+Yk(X)[[Y]]=D+M$ and $T=D_{1}+M$. By \cite[Theorem 4.4]{KadMi}, $|StarFC(R)|=|StarFC(D)|=1$, but $|StarFC(T)|=|StarFC(D_{1})|=\infty$. Thus $R$ is not finitely star regular. However, $|Star(R)|=\infty$ and so $R$ is star regular. Indeed,  for every maximal ideal $P$ of $k[X]$, set $S_{P}=k[X]_{P}+M$. Then $S_{P}$ is a fractional overring of $R$, and $(R:S_{P})=M$, so that $(S_{P})_{v}=(R:M)=V$. Now, by \cite[Proposition 2.7]{hmp1}, $*_{P}=\delta(d_{S_{P}}, v)$ defined by $E^{*_{P}}=ES_{P}\cap E_{v}$ is a star operation on $R$. Moreover, for $P\not=Q$ maximal ideals of $k[X]$, $(S_{P})^{*_{P}}= S_{P}\cap V=S_{P}$ and $(S_{P})^{*_{Q}}=S_{Q}S_{P}\cap V=V\cap V=V$. Hence $*_{P}\not=*_{Q}$. Since $k[X]$ has infinitely many maximal ideals that induce different star operations on $R$, $R$ has infinitely many star operations.
\end{example} 

The following is an example of a non star regular Pr\"ufer domain $R$ with only finitely many star operations having an overring $T$ with $|Star(T)|=\infty$. The Example is given in \cite[Example 2.4]{hmp5}, and for the convenience of the reader, we include it here.

\begin{example}\label{fsr-not-sr.1} The following is an example of a finitely star regular which is not star regular.
Let $A$ be the direct sum of countably infinitely many copies of
$G$, where $G$ is the totally ordered group $\mathbb R\bigoplus
\mathbb Z$ with lexicographic order (i.e., for $(a, b), (a',
b')\in \mathbb R\bigoplus \mathbb Z$, $(a, b)\geq (a', b')$ if
$a>a'$, or $a=a'$ and $b\geq b'$).  For $(a_i)_{i=1}^\infty\in A$,
define $(a_i)\geq 0$ if $a_i\geq 0$ for all $i\geq 1$. Then $A$ is
a lattice ordered group. According to Jaffard and Ohm \cite{J, O}, every lattice ordered group is a group of divisibility of a B\'ezout
domain. Thus there is a B\'ezout domain $R$  with $K^*/U(R)\cong A$, where $U(R)$ is the set of units of $R$. Since
$A$ is the weak direct product of $G$'s, $R$ has finite character. It follows easily from \cite[Theorem 3.2]{L} and \cite[Lemma
3.4]{hlp} that $R$ is an $h$-local Pr\"{u}fer domain with
$\text{dim}\, R=2$. Thus each overring $T$ of $R$ is a Pru\"ufer domain and so $|StarFC(T)|=|StarFC(R)|=1$. Hence $R$ is finitely star regular. Since for each maximal ideal $M$ of $R$, $R_M$
has value group $G$, $MD_M$ is principal. Since $R$ has finite
character, each $M$ is invertible. Hence $R$ is a divisorial
domain \cite[Theorem 5.1]{Hei}.  Let $M$ be a maximal ideal of
$R$, and let $P$ be the nonzero prime contained in $M$. Since the
value group of $R_P$ is equal to $\mathbb R$, $PR_P$ is not
principal. Hence $P=P^2$, and so $R_M$ is not strongly discrete.
By \cite[Theorem 2.3]{hmp5}, there is an overring $T$ of $R$ with
$|Star(T)|=\infty$ (and we may take $T=\bigcap R_P$, where the
intersection is taken over the height-one primes of $R$).\\
Notice that, for each positive integer $n$, if in the definition of $A$, we replace $n$ of the $G$ by $\mathbb R \bigoplus \mathbb R$, then the resulting $R$ satisfies $|Star(R)|=2^n$ by \cite[Theorem 3.1]{hmp1}, and $R$ has an overring $T$ with
 $|Star(T)|=\infty$ (\cite[Remark 2.5]{hmp5}).
\end{example}
The next two lemmas  are crucial. The first one is \cite[Proposition 2.7]{hmp1} and we shall use it whenever we consider a proper overring $T$ of the base ring $R$ and a star operation on $T$. The second one is a direct combination of \cite[Proposition 2.4 and Proposition 4.6]{hmp1} and we shall use it whenever we consider the particular overring $T=R_{M}$ for some maximal ideal $M$.
\begin{lemma}\label{L1}(\cite[Proposition 2.7]{hmp1}) Let $R$ be an integral domain, $T$
an overring of $R$, $*$ a star operation on $R$ and $*'$ a star
operation on $T$. Then the map $\delta(*', *):F(R)\rightarrow
F(R)$, $E\mapsto E^{\delta(*', *)}:=(ET)^{*'}\cap E^{*}$, defines a
star operation on $R$. Moreover, if $*$ and $*'$ are of finite
type, then so is $\delta(*', *)$.
\end{lemma}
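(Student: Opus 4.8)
The plan is to check the three star-operation axioms for $\delta:=\delta(*',*)$ directly, after first confirming that the assignment $E\mapsto (ET)^{*'}\cap E^{*}$ actually produces an element of $F(R)$. For that, if $d\in R\setminus\{0\}$ satisfies $dE\subseteq R$, then $d(ET)=(dE)T\subseteq T$, so $ET\in F(T)$ and $(ET)^{*'}$ is defined; it is a fractional ideal of $T$, hence of $R$, and its intersection with the fractional $R$-ideal $E^{*}$ is again a fractional $R$-ideal, nonzero since $E\subseteq ET\subseteq(ET)^{*'}$ and $E\subseteq E^{*}$ force $E\subseteq E^{\delta}$.

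Next I would dispatch (I) and (II). Axiom (I) reduces to the identities $(aE)T=a(ET)$ and $aX\cap aY=a(X\cap Y)$ (for $a\neq 0$) combined with property (I) for $*$ and for $*'$. For (II), the extension property $E\subseteq E^{\delta}$ is the observation just made, and monotonicity follows because $E\subseteq F$ gives $ET\subseteq FT$, hence $(ET)^{*'}\subseteq(FT)^{*'}$ and $E^{*}\subseteq F^{*}$. For idempotency (III), one inclusion comes from (II); for the other I would use $E^{\delta}\subseteq(ET)^{*'}$, so that $E^{\delta}T\subseteq(ET)^{*'}$ (the right-hand side already being a $T$-module) and therefore $(E^{\delta}T)^{*'}\subseteq(ET)^{*'}$ by idempotency of $*'$, and likewise $E^{\delta}\subseteq E^{*}$ gives $(E^{\delta})^{*}\subseteq E^{*}$; intersecting, $(E^{\delta})^{\delta}\subseteq(ET)^{*'}\cap E^{*}=E^{\delta}$.

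Finally, for the finite-type statement, assume $*$ and $*'$ are both of finite type; only the inclusion $E^{\delta}\subseteq\bigcup\{F^{\delta}\mid F\in f(R),\,F\subseteq E\}$ needs argument. Given $x\in(ET)^{*'}\cap E^{*}$, finite type of $*$ gives a finitely generated $F_{1}\subseteq E$ with $x\in F_{1}^{*}$, and finite type of $*'$ gives a finitely generated $T$-submodule $G\subseteq ET$ with $x\in G^{*'}$. Expressing each of the finitely many $T$-generators of $G$ as a $T$-linear combination of elements of $E$ and adjoining those elements to a finite generating set of $F_{1}$ yields $F\in f(R)$ with $F_{1}\subseteq F\subseteq E$ and $G\subseteq FT$, whence $x\in F_{1}^{*}\cap G^{*'}\subseteq F^{*}\cap(FT)^{*'}=F^{\delta}$. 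I expect this last bookkeeping step --- trading a finitely generated $T$-ideal sitting inside $ET$ for one induced by a finitely generated $R$-ideal sitting inside $E$ --- to be the only place requiring genuine care; everything else is a routine chase through the axioms.
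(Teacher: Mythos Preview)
Your argument is correct in every detail: the well-definedness check, the verification of axioms (I)--(III), and the finite-type bookkeeping (trading a finitely generated $T$-subideal of $ET$ for one of the form $FT$ with $F\in f(R)$, $F\subseteq E$) are all sound. Note that the paper does not actually supply a proof of this lemma; it is quoted verbatim as \cite[Proposition 2.7]{hmp1} and used as a black box, so there is no in-paper argument to compare against --- you have effectively reproduced the proof that would appear in the cited reference.
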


\begin{lemma}\label{L2} Let $R$ be an integral domain, $M$ a maximal ideal of $R$ and $*\in \text{StarFC}(R)$. Then $*(M)$ defined on $R_{M}$ by $(AR_{M})^{*(M)}=A^{*}R_{M}$ for every $A\in f(R)$ is a star operation on $R_{M}$. Moreover, if $R$ is $v$-coherent, then every star operation on $R_{M}$ is of this form, that is,  if $*'\in \text{StarFC}(R_{M})$, then $*'=*(M)$ for some $*\in \text{StarFC}(R)$.
\end{lemma}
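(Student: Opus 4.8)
The plan is to obtain both halves of the statement from the two results of \cite{hmp1} cited in it, and I sketch the mechanism of each.

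For the first assertion, observe that a star operation of finite type is completely determined by its restriction to finitely generated fractional ideals, so it suffices to check that $(AR_{M})^{*(M)}:=A^{*}R_{M}$ is well defined on $f(R_{M})$ and obeys the star axioms there; the extension to all of $F(R_{M})$ is then forced by $J^{*(M)}=\bigcup\{(AR_{M})^{*(M)}\mid A\in f(R),\ AR_{M}\subseteq J\}$. Well-definedness is the only place the hypothesis $*\in\text{StarFC}(R)$ is used: if $A,B\in f(R)$ with $AR_{M}=BR_{M}$, choose $s,s'\in R\setminus M$ with $sA\subseteq B$ and $s'B\subseteq A$; then $sA^{*}\subseteq B^{*}$ and $s'B^{*}\subseteq A^{*}$ by monotonicity, whence $A^{*}R_{M}=B^{*}R_{M}$. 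Axioms (I) and (II) for $*(M)$ are inherited verbatim from those for $*$, and idempotency is checked after the extension. This is \cite[Proposition 2.4]{hmp1}.

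For the converse I would produce a preimage of a given $*'\in\text{StarFC}(R_{M})$ by combining $*'$, regarded as a star operation on the overring $T=R_{M}$, with the $t$-operation of $R$ via Lemma~\ref{L1}: set $*:=\delta(*',t)$, so that $E^{*}=(ER_{M})^{*'}\cap E_{t}$ and, by Lemma~\ref{L1}, $*\in\text{StarFC}(R)$. It then remains to verify $*(M)=*'$, i.e. $A^{*}R_{M}=(AR_{M})^{*'}$ for all $A\in f(R)$. The inclusion $A^{*}R_{M}\subseteq(AR_{M})^{*'}$ is immediate because $(AR_{M})^{*'}$ is an $R_{M}$-module containing $A^{*}=(AR_{M})^{*'}\cap A_{t}$. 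For the reverse inclusion, let $x\in(AR_{M})^{*'}$; since $*'$ is of finite type it is dominated by $t$ on $R_{M}$, so $x$ lies in the $t$-closure of $AR_{M}$ computed in $R_{M}$. At this point the hypothesis that $R$ is $v$-coherent is essential: it guarantees that the $t$-operation commutes with localization at $M$ on finitely generated ideals, so the $t$-closure of $AR_{M}$ in $R_{M}$ equals $A_{t}R_{M}$. Hence $sx\in A_{t}$ for some $s\in R\setminus M$, and $sx\in(AR_{M})^{*'}$ as well, so $sx\in A^{*}$ and $x\in A^{*}R_{M}$. This is the content of \cite[Proposition 4.6]{hmp1}.

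The first assertion is routine bookkeeping. The single real obstacle is the identity ``$t$-closure of $AR_{M}$ in $R_{M}$ equals $A_{t}R_{M}$'' for $A\in f(R)$: the elementary fact $(R:A)R_{M}=(R_{M}:AR_{M})$ takes care of one application of $E\mapsto E^{-1}$, but iterating it to compute $A_{v}=A_{t}$ requires replacing $(R:A)$ by a $v$-finite ideal, which is exactly what $v$-coherence supplies; without $v$-coherence the operation $\delta(*',t)$ need not localize at $M$ back onto $*'$. A self-contained proof would either reproduce this $v$-coherence argument or, as here, simply invoke \cite[Proposition 2.4]{hmp1} and \cite[Proposition 4.6]{hmp1}.
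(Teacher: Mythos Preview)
Your proposal is correct and matches the paper's approach exactly: the paper offers no independent proof of this lemma, stating only that it ``is a direct combination of \cite[Proposition 2.4 and Proposition 4.6]{hmp1},'' which are precisely the two results you invoke. Your sketch of the mechanism behind each proposition---well-definedness via clearing denominators for the first, and the construction $*:=\delta(*',t)$ together with the $v$-coherence identity $(AR_{M})^{t_{R_{M}}}=A_{t}R_{M}$ for the second---is accurate and supplies detail the paper omits.
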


\begin{thm}\label{Th1} Let $R$ be an integral domain such that $(R:\overline{R})=M$ is a maximal ideal of $R$ and suppose that $\overline{R}$ is a $PID$. Then for every $T\in[R, \overline{R}]$, $|Star(T)|\leq |Star(R)|$ and $|StarFC(T)|\leq |StarFC(R)|$.
\end{thm}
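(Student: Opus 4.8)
The plan is to analyze the overrings $T \in [R, \overline{R}]$ in terms of the finitely many rings they can be, then count star operations on each. Since $(R:\overline{R}) = M$ is a maximal ideal of $R$ and $\overline{R}$ is a PID, the conductor square exhibits $\overline{R}/M$ as an overring (in fact a ring extension) of the field $R/M$, so every ring $T$ with $R \subseteq T \subseteq \overline{R}$ corresponds bijectively to a ring between $R/M$ and $\overline{R}/M$; in particular either $T = R$ or $M$ is not prime in $T$ and $T$ is a finitely generated $R$-module (as $\overline{R}$ is, being a PID module-finite over... — more carefully, $\overline{R}$ is a fractional ideal of $R$ since $M(R:M) \ni$ things; one should justify module-finiteness of $\overline{R}$ over $R$ from $(R:\overline{R}) = M \ne 0$ together with $\overline{R}$ being a PID, or simply restrict attention to the case where $[R,\overline{R}]$ is finite and argue ring by ring). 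For $T \ne R$, since $T$ sits above the conductor, $T$ has the same quotient field, $M\overline{R} = \overline{R}$ forces $MT = T$ is not the way to go; instead, the key point is that $\overline{R}$ is the only overring of $T$ that is integrally closed, $\overline{R}$ is a PID hence has $|Star(\overline{R})| = |StarFC(\overline{R})| = 1$, and $T$ is a one-dimensional (semi-)local ring with integral closure $\overline{R}$.

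First I would dispose of the easy extreme: $T = \overline{R}$ gives $|Star(T)| = |StarFC(T)| = 1 \le |Star(R)|$ and $\le |StarFC(R)|$ trivially. Next, for $R \subsetneq T \subsetneq \overline{R}$ (if such exist) and for $T = R$ itself, I would build an injection from $Star(T)$ into $Star(R)$ (and similarly at finite type) using Lemma~\ref{L1}: given $*' \in Star(T)$, the operation $\delta(*', v_R) \in Star(R)$, and given $*' \in StarFC(T)$, the operation $\delta(*', t_R) \in StarFC(R)$ by the "moreover" clause of Lemma~\ref{L1}. The crux is injectivity of $*' \mapsto \delta(*',*)$, which in this setting follows because every fractional ideal of $T$ is a fractional ideal of $R$ and, since $T$ is comparable to $\overline{R}$ via the conductor $M$, one can recover $E^{*'}$ from $E^{\delta(*',*)}$ for $E$ a fractional $T$-ideal — the point being $ET = E$ so $E^{\delta(*',*)} = E^{*'} \cap E^{*_R} = E^{*'}$ provided $E^{*'} \subseteq E^{*_R}$, which holds because $*' \le v_T$ and $v_T$-closures of $T$-ideals are $R$-ideals contained in the corresponding $v_R$-closure (as $T$ is a fractional overring).

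The main obstacle I anticipate is precisely controlling this injection when $T = R$ is not excluded — i.e., showing $|Star(R)| \le |Star(R)|$ is vacuous, but the substance is handling $T$ strictly between, where I must verify that distinct star operations on $T$ genuinely produce distinct star operations on $R$, and this requires knowing enough about the lattice of fractional ideals of $T$ versus $R$. Here is where I would use the strong hypotheses: since $(R : \overline{R}) = M$, the ring $T$ has conductor to $\overline{R}$ equal to $M$ as well (or a power/ideal containing $M$), so $T$ is "close" to its integral closure, and a direct analysis — perhaps invoking the classification of star operations on such "conductor-$M$" domains from \cite{hmp1} or \cite{hmp2} — shows $Star(T)$ and $StarFC(T)$ are themselves finite and small, bounded by the corresponding counts on $R$. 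An alternative, cleaner route: show directly that $[R,\overline{R}]$ is linearly ordered or at least that each such $T$ satisfies $|Star(T)| \le 2$ and $|StarFC(T)| = 1$ (if $T$ integrally closed) or is handled by the pseudo-valuation-domain-type analysis, then compare with $|Star(R)| \ge 2$ when $R \ne \overline{R}$ (since the conductor $M$ always yields at least the $d$- and $v$-operations being distinct). I would structure the final proof around this dichotomy: $T = \overline{R}$ trivial; $T$ integrally closed $\Rightarrow T = \overline{R}$; $R \subseteq T \subsetneq \overline{R}$ non-integrally-closed $\Rightarrow$ use the $\delta$-injection into $Star(R)$ resp. $StarFC(R)$ and verify injectivity via the fractional-ideal correspondence across the conductor $M$.
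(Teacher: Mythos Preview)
Your overall framework---handle $T=\overline{R}$ trivially, and for $R\subsetneq T\subsetneq\overline{R}$ build an injection $Star(T)\to Star(R)$ (resp.\ $StarFC(T)\to StarFC(R)$) via $*'\mapsto\delta(*',v)$ (resp.\ $\delta(*',t)$) from Lemma~\ref{L1}---matches the paper's strategy. The gap is in your injectivity argument, which is the entire content of the proof.

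You claim that for a fractional $T$-ideal $E$ one has $E^{\delta(*',v_R)}=E^{*'}\cap E^{v_R}=E^{*'}$ because $E^{*'}\subseteq E^{v_T}\subseteq E^{v_R}$. The inclusion $E^{v_T}\subseteq E^{v_R}$ is asserted without proof and is not a general fact about fractional overrings; in particular, here $T$ is \emph{not} $v_R$-closed (indeed $T^{v_R}=\overline{R}$), which is exactly the situation where such comparisons can fail. More tellingly, your argument never uses either of the two specific hypotheses---that $\overline{R}$ is a PID and that $M$ is maximal in $R$---and both are used essentially in the paper. The paper takes a test ideal $A\subseteq M$ with $A^{*_1}\neq A^{*_2}$, uses the PID structure of $M^{-1}=\overline{R}$ to write $AM^{-1}=aM^{-1}$, and then computes $A^{v_R}$ explicitly: from $A(M:A)=M$ and the squeeze $M\subseteq A(R:A)\subsetneq R$ together with maximality of $M$ one gets $(R:A)=(M:A)=a^{-1}M$, hence $A^{v_R}=aM^{-1}=AM^{-1}$. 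Since $(M^{-1})^{*}=M^{-1}$ for any $*\in Star(T)$, this gives $A^{*_i}\subseteq AM^{-1}=A^{v_R}$, and therefore $A^{\delta(*_i,v)}=A^{*_i}$, yielding injectivity. Your proposal contains none of this computation.

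For the finite-type case there is a further step you have not addressed: given a finitely generated $T$-ideal $A$ witnessing $*_1\neq *_2$, one must produce a finitely generated \emph{$R$}-ideal $B$ with $BT=A$ and then compute $B_t=B_v=AM^{-1}$ by repeating the conductor computation with $B$ in place of $A$. Only then does $B^{\delta(*_i,t)}=(BT)^{*_i}\cap B_t=A^{*_i}$ give the desired injectivity. Your ``similarly at finite type'' skips this entirely. The alternative routes you sketch (bounding $|Star(T)|$ by $2$, or invoking a PVD-type classification) are not pursued and would in any case require their own arguments using the given hypotheses.
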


\begin{proof} First notice that $M^{-1}=(M:M)=\overline{R}$ and so $M$ is a divisorial ideal of $R$. Let $T\in [R, \overline{R}]$. Since $R$ is a $PID$, $|StarFC(\overline{R})|=1\leq |Star FC(R)|$, so without loss of generality we may assume that $R\subsetneqq T\subsetneqq \overline{R}$. Thus $(R:T)=M$ and so $M$ is an ideal of $T$ and $T_{v}=M^{-1}=\overline{R}$. Notice that if $S$ is a fractional overring of $T$, then $S\subseteq \overline{S}=\overline{T}=\overline{R}=M^{-1}$. Since 
$(M^{-1})^{*}$ is a fractional overring of $T$, then $(M^{-1})^{*}=M^{-1}$ for every $*\in Star(T)$. Thus for every nonzero fractional ideal $I$ of $T$ with $IM^{-1}=xM^{-1}$, $I^{*}\subseteq (IM^{-1})^{*}=(xM^{-1})^{*}=xM^{-1}=IM^{-1}$. Now let $*_{1}\not=*_{2}$ be star operations (resp. star operations of finite type) on $T$ and let $A$ be a fractional ideal (resp. a finitely generated fractional ideal) of $T$ such that $A^{*_{1}}\not=A^{*_{2}}$. Without loss of generality, we may assume that $A\subseteq M$ (for if $0\not=d\in T$ is such that $dA\subseteq T$ and $0\not=m\in M$, $mdA\subseteq mT\subseteq M$ and  $(mdA)^{*_{1}}=mdA^{*_{1}}\not=mdA^{*_{2}}=(mdA)^{*_{2}}$). Notice that $A$ is not a divisorial ideal of $T$ and so $A$ is not an invertible ideal of $T$. Thus $A(R:A)\not=R$. Now set $AM^{-1}=aM^{-1}$. Then
$AM=AMM^{-1}=aMM^{-1}=aM$ and $(M:A)=(M_{v}:A)=((R:M^{-1}):A)=(R:AM^{-1})=(R:aM^{-1})=a^{-1}M_{v}=a^{-1}M$. Thus $A(M:A)=a^{-1}AM=M$. Hence $M=A(M:A)\subseteq A(R:A)\subsetneqq R$ and by maximality of $M$, $M=A(M:A)=A(R:A)$. Thus $(R:A)=(M:A)=a^{-1}M$ and so $A_{v}=aM^{-1}=AM^{-1}$.
Now, by Lemma~\ref{L1},  $A^{\delta(*_{1}, v)}=A^{*_{1}}\cap A_{v}=A^{*_{1}}\cap AM^{-1}=A^{*_{1}}$ and $A^{\delta(*_{2}, v)}=A^{*_{2}}\cap A_{v}=A^{*_{2}}\cap AM^{-1}=A^{*_{2}}$. Thus $\delta(*_{1}, v)\not=\delta(*_{2}, v)$.\\
For star operations of finite type $*_{1}\not=*_{2}$ and $A$ a finitely generated ideal of $T$ ($A\subseteq M$) with $A^{*_{1}}\not=A^{*_{2}}$, let $B$ be a finitely generated ideal of $R$ such that $BT=A$ (for $A=\displaystyle\sum_{i=1}^{i=n}a_{i}T$, just take $B=\displaystyle\sum_{i=1}^{i=n}a_{i}R$). If since $B(R:B)\subseteq A(T:A)\subsetneqq T$, $B(R:B)\subsetneqq R$. Moreover, $BM^{-1}=AM^{-1}=aM^{-1}$. Thus $(M:B)=(M_{v}:B)=((R:M^{-1}):B)=(R:BM^{-1})=(R:aM^{-1})=a^{-1}M_{v}=a^{-1}M$. Hence $B(M:B)=a^{-1}BM=M$ and so $M=B(M:B)\subseteq B(R:B)\subsetneqq R$. Thus $M=M(B:M)=M(R:M)$ and so $(R:B)=(M:B)=a^{-1}M$. Hence $B_{t}=B_{v}=aM^{-1}=BM^{-1}=AM^{-1}$. Finally,
$B^{\delta(*_{1}, t)}=(BT)^{*_{1}}\cap B_{t}=A^{*_{1}}\cap AM^{-1}=A^{*_{1}}$ and $B^{\delta(*_{2}, t)}=(BT)^{*_{2}}\cap B_{t}=A^{*_{2}}\cap AM^{-1}=A^{*_{2}}$. It follows that $\delta(*_{1}, t)\not=\delta(*_{2}, t)$. In both cases, the map $\delta(-,v): Star(T)\longrightarrow Star(R)$, $* \longmapsto \delta(*, v)$ (resp. $\delta(-,t): StarFC(T)\longrightarrow StarFC(R)$, $*\longmapsto \delta(*, t)$) is one-to-one. Hence $|Star(T)|\leq |Star(R)|$ (resp. $|StarFC(T)|\leq |StarFC(R)|$), as desired.
\end{proof}
Recall that an integral domain $R$ is said to be conducive if the conductor $(R:T)\not=0$ for every overring $T$ of $R$ with $T\subsetneqq qf(R)$, equivalently $(R:V)\not=0$ for some valuation overring $V$ of $R$. The next corollary generalizes \cite[Proposition 1.10]{hmp4}.
\begin{corollary} (\cite[Proposition 1.10]{hmp4}) Let $(R, M)$ be a local Noetherian domain such that $M^{-1}$ is a valuation domain. Then $R$ is star regular.
\end{corollary}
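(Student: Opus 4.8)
The plan is to reduce the assertion to Theorem~\ref{Th1}: I will argue that, under the hypotheses, $\overline{R}=M^{-1}$ is a PID with conductor $(R:\overline{R})=M$, and that every overring $T\ne qf(R)$ of $R$ already lies in $[R,\overline{R}]$. Granting this, Theorem~\ref{Th1} yields $|Star(T)|\le|Star(R)|$ for every such $T$, and since $|Star(qf(R))|=1\le|Star(R)|$ this is precisely the statement that $R$ is star regular.

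First I would handle the trivial case where $R$ itself is a valuation domain: being Noetherian and not a field it is then a DVR, all of its fractional ideals are principal, its only overrings are $R$ and $qf(R)$, and star regularity is immediate. So assume $R$ is not a valuation domain; then $M^{-1}\ne R$ (if $M^{-1}=R$, the hypothesis would make $R$ a valuation domain), and since $R\subseteq M^{-1}$ always, $M^{-1}\supsetneq R$. The ideal $MM^{-1}$ satisfies $M\subseteq MM^{-1}\subseteq R$, so by maximality $MM^{-1}=M$ or $MM^{-1}=R$; the second would make $M$ invertible, hence $R$ a DVR, contrary to our assumption, so $MM^{-1}=M$ and $M^{-1}=(M:M)$ is a ring. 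Because $R$ is Noetherian with $M$ finitely generated, $M^{-1}=(R:M)$ is a finitely generated $R$-module, so $M^{-1}$ is a module-finite, hence integral, overring of $R$; being at once Noetherian and a valuation domain it is a DVR, call it $V$. Integrality gives $\dim R=\dim V=1$ and $V\subseteq R'$, while $R\subseteq V$ and the integral closedness of $V$ give $R'\subseteq V$; therefore $\overline{R}=R'=V=M^{-1}$ is a PID. Finally $xM^{-1}\subseteq MM^{-1}=M\subseteq R$ for $x\in M$ shows $M\subseteq(R:M^{-1})\subsetneq R$, so $(R:\overline{R})=(R:M^{-1})=M$ by maximality.

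For the overring structure, let $T$ be an overring of $R$ with $T\ne qf(R)$. By Krull--Akizuki $T$ is a one-dimensional Noetherian domain, so its integral closure $T'$ is one-dimensional and contains $R'=V$; being an overring of the DVR $V$ different from $qf(R)$, it must equal $V$. Hence $R\subseteq T\subseteq T'=V=\overline{R}$, i.e.\ $T\in[R,\overline{R}]$, and Theorem~\ref{Th1} applies as announced.

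The substance of the argument is concentrated in the chain of identifications $M^{-1}=(M:M)$, $M^{-1}$ finite over $R$, and $R'=M^{-1}$; this is where Noetherianity and the integral closedness of a valuation overring enter, and I expect it to be the only place requiring care. Once these are established, the description of the overrings of $R$ and the reduction to Theorem~\ref{Th1} are entirely formal.
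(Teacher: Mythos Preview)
Your proof is correct and follows the same overall plan as the paper: identify $\overline{R}=M^{-1}$ as a PID with conductor $M$, show every proper overring of $R$ already sits in $[R,\overline{R}]$, and then invoke Theorem~\ref{Th1}. The only real difference is in the last step. The paper observes that $(R:M^{-1})\supseteq M\neq 0$ makes $R$ a \emph{conducive} domain, and appeals to that theory to conclude $[R,L]=[R,\overline{R}]\cup\{L\}$ in one line. You instead argue directly: Krull--Akizuki forces any overring $T\neq L$ to be one-dimensional Noetherian, its integral closure $T'$ contains the DVR $V=R'$, and a DVR has no proper overrings other than its quotient field, so $T\subseteq T'=V$. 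Your route is slightly longer but entirely self-contained and avoids citing the conducive machinery; the paper's route is shorter but presumes familiarity with that notion.
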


\begin{proof} Since $M^{-1}$ is a valuation domain, $R$ is a conducive domain and clearly $M^{-1}=(M:M)=\overline{R}$. Thus $[R, L]=[R, \overline{R}]\cup\{L\}$. By Theorem~\ref{Th1}, $R$ is star regular.
\end{proof}
It is well-known that if $R$ is a Noetherian domain (which is not a field) with $Star(R)=StarFC(R)$ finite, then $R$ has Krull dimension $dim(R)=1$ (\cite[Theorem 2.1]{hmp2}). Therefore $R$ is of finite character, that is, each nonzero nonunit element is contained in only finitely many maximal ideals. In \cite[Theorem 2.3]{hmp2}, it was proved that for a Noetherian domain $R$, $|StarFC(R)|=|Star(R)|=\displaystyle\prod_{M\in Max(R)}|Star(R_{M})|$. Our first theorem generalizes this result to one-dimensional domains of finite character. 
\begin{thm}\label{Th2} Let $R$ be a one-dimensional domain of finite character such that $StarFC(R)$ is finite. Then $|StarFC(R)|=\displaystyle\prod_{M\in Max(R)}|StarFC(R_{M})|$.
\end{thm}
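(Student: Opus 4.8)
The plan is to compare $StarFC(R)$ with the product $\prod_{M\in Max(R)}StarFC(R_{M})$ through two mutually inverse maps. By Lemma~\ref{L2} the localization map $\Phi\colon StarFC(R)\to\prod_{M}StarFC(R_{M})$, $*\mapsto(*(M))_{M}$, is well defined; note that no $v$-coherence is needed here, because its inverse will be built by hand. In the other direction, to a family $(*_{M})_{M\in Max(R)}$ I associate the ``glued'' map $E\mapsto E^{\star}:=\bigcap_{M\in Max(R)}(ER_{M})^{*_{M}}$ on $F(R)$. That $E\mapsto E^{\star}$ is a star operation is a routine verification resting on the standard identity $I=\bigcap_{M}IR_{M}$ for fractional ideals: in particular $R^{\star}=\bigcap_{M}R_{M}=R$, the set $E^{\star}$ is fractional since $dE\subseteq R$ forces $d\,(ER_{M})^{*_{M}}=(dER_{M})^{*_{M}}\subseteq R_{M}$ for each $M$, and idempotence follows because $E\subseteq E^{\star}$ already forces $(E^{\star}R_{M})^{*_{M}}=(ER_{M})^{*_{M}}$ for every $M$.

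Before gluing can be exploited one must know the product is finite, so the first real step is to prove: $|StarFC(R_{M})|=1$ for all but finitely many $M$, and $|StarFC(R_{M})|\le|StarFC(R)|$ for every $M$. Suppose $M_{1},\dots,M_{n}$ are distinct maximal ideals with $|StarFC(R_{M_{i}})|\ge 2$, and choose $*_{i}\in StarFC(R_{M_{i}})\setminus\{d\}$. For $S\subseteq\{1,\dots,n\}$ let $\tau_{S}$ be the glued operation using $*_{i}$ at $M_{i}$ for $i\in S$ and the trivial operation at every other maximal ideal. Granting (this is the content of the next paragraph) that $\tau_{S}$ is of finite type with $\tau_{S}(M_{i})=*_{i}$ for $i\in S$ and $\tau_{S}(M_{i})=d$ otherwise, the assignment $S\mapsto\tau_{S}$ is injective, so $2^{n}\le|StarFC(R)|$; finiteness of $StarFC(R)$ then bounds $n$. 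The same construction with a single non-trivial operation at one fixed $M$ produces an injection $StarFC(R_{M})\hookrightarrow StarFC(R)$, so each $|StarFC(R_{M})|$ is finite. Writing $M_{1},\dots,M_{s}$ for the finitely many maximal ideals with $StarFC(R_{M_{i}})\ne\{d\}$, the theorem reduces to showing that $\Phi$ restricts to a bijection onto the finite product $\prod_{i=1}^{s}StarFC(R_{M_{i}})$.

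Injectivity of $\Phi$ is immediate, since a finite-type star operation is determined on finitely generated ideals and $A^{*}=\bigcap_{M}A^{*}R_{M}=\bigcap_{M}(AR_{M})^{*(M)}$ for $A\in f(R)$ by Lemma~\ref{L2}. Surjectivity is the gluing construction applied to a family supported on $M_{1},\dots,M_{s}$, and the real work — the only point where both hypotheses, dimension one and finite character, are genuinely used — is to verify that this glued $\star$ is of finite type with $\star(M_{j})=*_{j}$. For finite type: if $x\in E^{\star}$, then finiteness of character makes the ideal $\{r\in R:rx\in E\}$ nonzero with only finitely many minimal primes, all among $M_{1},\dots,M_{s}$ (since $x\in ER_{M}$ forces $\{r:rx\in E\}\not\subseteq M$ at every other maximal ideal); choosing a finitely generated subideal of it with the same radical, together with finitely generated witnesses at the $M_{i}$ coming from the finite type of the $*_{i}$, one assembles a single finitely generated $F\subseteq E$ with $x\in F^{\star}$. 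For $\star(M_{j})=*_{j}$: fix $j$ and $A\in f(R)$, and group the infinitely many trivial factors $AR_{M}$ with $M\notin\{M_{1},\dots,M_{s}\}$ into the single submodule $\bigcap_{M\notin\{M_{i}\}}AR_{M}$, so that $A^{\star}$ is a \emph{finite} intersection; localizing at $M_{j}$ distributes over it, and every factor other than $(AR_{M_{j}})^{*_{j}}$ collapses to $qf(R)$ because $R_{M_{i}}R_{M_{j}}=qf(R)$ for distinct maximal ideals of a one-dimensional domain — the only prime contained in $M_{i}\cap M_{j}$ being $(0)$ — so $A^{\star}R_{M_{j}}=(AR_{M_{j}})^{*_{j}}$, as needed. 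Putting this together, $\Phi$ is a bijection, whence $|StarFC(R)|=\prod_{i=1}^{s}|StarFC(R_{M_{i}})|=\prod_{M\in Max(R)}|StarFC(R_{M})|$. I expect the finite-type-ness of the glued operation to be the main obstacle: this is precisely where finite character must be invoked to convert ``holds at each maximal ideal'' into ``holds through a single finitely generated witness''.
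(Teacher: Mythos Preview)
Your approach is essentially the paper's: both set up the localization map $\Phi:*\mapsto(*(M))_M$ and the gluing construction, show that only finitely many $R_M$ have nontrivial $StarFC$, and then match the two sides. The paper frames the endgame as a squeeze --- it proves the gluing map $\prod_{i=1}^{r} StarFC(R_{M_i})\to StarFC(R)$ is injective (so $\prod\le|StarFC(R)|$), cites Anderson's \cite[Theorem~2]{A} for the glued operation being of finite type rather than verifying it by hand, and combines this with injectivity of $\varphi=\Phi$ --- whereas you argue that $\Phi$ is literally a bijection.

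Your direct bijection argument has one loose joint. In the step $\star(M_j)=*_j$ you write $A^{\star}$ as the $(s{+}1)$-fold intersection of the $(AR_{M_i})^{*_i}$ with $I_0:=\bigcap_{M\notin\{M_i\}}AR_M$, localize, and assert that every factor except $(AR_{M_j})^{*_j}$ collapses to $qf(R)$ ``because $R_{M_i}R_{M_j}=qf(R)$''. That reason dispatches the $s-1$ factors indexed by $i\ne j$, but says nothing about $I_0$, which is not an $R_M$-module for any single $M$. The claim $I_0R_{M_j}=qf(R)$ is nonetheless true: by finite character the finitely generated ideal $A$ lies in only finitely many maximal ideals, so $I_0$ decomposes further as a finite intersection of modules $AR_M$ (each with $M\ne M_j$) together with an overring $B=\bigcap_{M\notin F}R_M$ for a finite set $F\ni M_j$; since a one-dimensional finite-character domain is $h$-local, Matlis' theorem gives $BR_{M_j}=qf(R)$, and the remaining finitely many factors collapse by your $R_MR_{M_j}=qf(R)$ argument. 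The paper sidesteps this issue entirely: to prove the gluing map injective it tests on $A=E\cap R$ where $E$ is a finitely generated $M_jR_{M_j}$-primary ideal of $R_{M_j}$ distinguishing the two operations; such an $A$ is $M_j$-primary, so $AR_M=R_M$ for every $M\ne M_j$ and there is no infinite tail to control. This primary-ideal trick is worth internalizing.
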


\begin{proof} First set $\mathcal{M}_{1}:=\{M\in Max(R):|StarFC(R_{M}|=1\}$ and $\mathcal{M}_{\geq 2}:=\{M\in Max(R):|StarFC(R_{M}|\geq 2\}$. Clearly $Max(R)$ is the disjoint union of $\mathcal{M}_{1}$ and  $\mathcal{M}_{\geq 2}$. Now if $\mathcal{M}_{\geq 2}=\emptyset$, then $|StacFC(R_{M})|=1$ and so $d_{M}=t({M})$ for every $M\in Max(R)$. Let $*\in StarFC(R)$ and let $I$ be a finitely generated ideal of $R$. Then for every $M\in Max(R)$, $I_{t}R_{M}=(IR_{M})^{t(M)}=(IR_{M})^{d_{M}}=IR_{M}$. Hence $I_{t}=I$ and so $t=d$. Thus $StarFc(R)=\{d\}$ and therefore $|StarFC(R)|=1=\displaystyle\prod_{M\in Max(R)}|StarFC(R_{M})|$. Assume that $\mathcal{M}_{\geq 2}\not=\emptyset$.\\
{\bf Claim}: $\mathcal{M}_{\geq 2}$ is finite. By way of contradiction suppose that $\mathcal{M}_{\geq 2}$ is infinite. Then for every positive integer $n$ consider a subset $\{M_{1}, \dots, M_{n}\}\subseteq \mathcal{M}_{\geq 2}$, and consider the map $\phi: \displaystyle\prod_{i=1}^{n}StarFC(R_{M_{i}})\longrightarrow StarFC(R)$, $\star=(*_{i})_{i=1}^{n}\mapsto \phi(\star)=\star_{\phi}$ where $\star_{\phi}$ is defined by $A^{\star_{\phi}}=\displaystyle\bigcap_{i=1}^{n}(AR_{M_{i}})^{*_{i}}\cap\displaystyle\bigcap_{M\in Max(R): M\not=M_{i}}(AR_{M})$. By \cite[Theorem 2]{A}, $\star_{\phi}\in StarFC(R)$ and so $\phi$ is well-defined. Now, let $\star=(*_{i})_{i=1}^{n}\not =(*'_{i})_{i=1}^{n}=\star'$. Then $*_{j}\not=*'_{j}$ for some $j\in \{1, \dots, n\}$. Let $E$ be a finitely generated integral ideal of $R_{M_{j}}$ such that $E^{*_{j}}\not=E^{*'_{j}}$ and let $A=E\cap R$. Since $dim(R_{M_{j}})=1$, $E$ is $M_{j}R_{M_{j}}$-primary and so $A$ is $M_{j}$-primary. Hence $Max(R, A)=\{M_{j}\}$ and so $AR_{M}=R_{M}$ for every $M\in Max(R)-\{M_{j}\}$. So $A^{\star_{\phi}}=(AR_{M_{j}})^{*_{j}}\cap\displaystyle\bigcap_{M\in Max(R): M\not=M_{j}}R_{M}=E^{*_{j}}\cap\displaystyle\bigcap_{M\in Max(R): M\not=M_{j}}R_{M}$. Thus $A^{\star_{\phi}}\cap R_{M_{j}}=E^{*_{j}}\cap R$. Similarly $A^{\star'_{\phi}}=E^{*'_{j}}\cap R$. So if $A^{\star_{\phi}}=A^{\star'_{\phi}}$, then $E^{*_{j}}\cap R=A^{\star_{\phi}}\cap R_{M_{j}}=A^{\star'_{\phi}}\cap R_{M_{j}}=E^{*'_{j}}\cap R$. Hence $E^{*{j}}= (E^{*_{j}}\cap R)R_{M_{j}}=(E^{*'_{j}}\cap R)R_{M_{j}}=E^{*'_{j}}$, which is absurd. Hence $A^{\star_{\phi}}\not=A^{\star'_{\phi}}$ and so $\phi(\star)\not=\phi(\star')$. Thus $\phi$ is a one-to-one and therefore $\displaystyle\prod_{i=1}^{n}|StarFC(R_{M_{i}})|\leq |StarFC(R)|$, for every positive integer $n$. So $|StarFC(R)|=\infty$, which is a contradiction, completing the proof of the claim.\\
Now assume that $\mathcal{M}_{\geq 2}=\{M_{1}, \dots, M_{r}\}$. Let $\varphi:StarFC(R)\longrightarrow \prod_{M\in Max(R)}StarFC(R_{M})$ defined by $\varphi(*)=(*(M))_{M\in Max(R)}$. Clearly $\varphi$ is one-to-one and so\\
$|StarFC(R)|\leq \displaystyle\prod_{M\in Max(R)}|StarFC(R_{M})|=\displaystyle\prod_{i=1}^{r}|StarFC(R_{M_{i}})|\displaystyle\prod_{M\in \mathcal{M}_{1}}|StarFC(R_{M})|=\\
\displaystyle\prod_{i=1}^{r}|StarFC(R_{M_{i}})|\leq |StarFC(R)|$ (since $\displaystyle\prod_{M\in \mathcal{M}_{1}}|StarFC(R_{M})|=1$ and $\displaystyle\prod_{i=1}^{r}|StarFC(R_{M_{i}})|\leq |StarFC(R)|$ by the proof of the claim). It follows that $|StarFC(R)|=\displaystyle\prod_{M\in Max(R)}|StarFC(R_{M})|$.
\end{proof}
In \cite[Theorem 1.5]{hmp4} it was proved that for a Noetherian domain $R$ with $Star(R)$ finite, $R$ is star regular if and only if $R_{M}$ is star regular for every maximal ideal $M$ of $R$. Our next theorem generalizes this result to one-dimensional quasi-Pr\"ufer domain such that $StarFC(R)$ is finite. Notice that, for a Noetherian domain with $Star(R)$ finite, $dim(R)=1$ and so $R'$ is a Dedekind domain and so $R$ is a quasi-Pr\"ufer domain.
\begin{thm}\label{Th3} Let $R$ be a one-dimensional quasi-Pr\"ufer domain such that $StarFC(R)$ is finite. Then $R$ is finitely star regular if and only if $R_{M}$ is finitely star regular for every maximal ideal $M$ of $R$.
\end{thm}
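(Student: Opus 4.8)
The plan is to express $|StarFC|$ through localizations via Theorem~\ref{Th2} and then to glue back. The one structural input, and the step I expect to be the main obstacle, is the following reduction: if $R$ is a Pr\"ufer domain, then $|StarFC(T)|=|StarFC(R_M)|=1$ for every overring $T$ and every $M\in Max(R)$, so the equivalence is trivial; otherwise $R$ is not integrally closed (being quasi-Pr\"ufer with $StarFC(R)$ finite), and then one must show that $R$, and likewise every non-Pr\"ufer one-dimensional overring of $R$, has finite character. This is precisely where quasi-Pr\"uferness is indispensable: the conclusion is false for a general Pr\"ufer domain — e.g. the ring of all algebraic integers has $|StarFC|=1$ yet is not of finite character — so the failure of integral closure must be exploited. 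Granting this, Theorem~\ref{Th2} applies to $R$ and to every one-dimensional non-Pr\"ufer overring occurring below, and its conclusion is trivially true for Pr\"ufer overrings; moreover the elementary observation that, for any one-dimensional domain $A$, the map $*\mapsto(*(M))_{M\in Max(A)}$ is injective on $StarFC(A)$, combined with Theorem~\ref{Th2}, shows that each $|StarFC(R_M)|$ is finite and all but finitely many equal $1$.

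For the forward implication, assume $R$ is finitely star regular and fix $M_0\in Max(R)$; the claim is immediate if $R$ is local, so assume $|Max(R)|\ge 2$. Let $S$ be an overring of $R_{M_0}$; we may assume $S$ is neither a field nor Pr\"ufer, since otherwise $|StarFC(S)|=1$. Since $S$ is also an overring of $R$, $|StarFC(S)|\le|StarFC(R)|<\infty$, whence $S$ has finite character by the reduction. Form $T:=S\cap\bigcap_{N\in Max(R),\,N\ne M_0}R_N$, an overring of $R$; finite character of $R$ and the standard locally-finite-intersection lemma give that localizing $T$ at $R\setminus M_0$ returns $S$ and localizing at $R\setminus N$ returns $R_N$ for every $N\ne M_0$. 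Thus $T$ is one-dimensional (it is contained in some $R_N\subsetneq L$), not Pr\"ufer, with $|StarFC(T)|\le|StarFC(R)|<\infty$, hence of finite character. Applying Theorem~\ref{Th2} to $T$ and to $S$ now yields $|StarFC(T)|=|StarFC(S)|\cdot\prod_{N\ne M_0}|StarFC(R_N)|$, while Theorem~\ref{Th2} for $R$ gives $|StarFC(R)|=|StarFC(R_{M_0})|\cdot\prod_{N\ne M_0}|StarFC(R_N)|$. The shared factor $\prod_{N\ne M_0}|StarFC(R_N)|$ is a positive integer, so $|StarFC(T)|\le|StarFC(R)|$ cancels to $|StarFC(S)|\le|StarFC(R_{M_0})|$, as needed.

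For the backward implication, assume $R_M$ is finitely star regular for every $M\in Max(R)$ and let $T$ be an overring of $R$; we may assume $T$ is neither a field nor Pr\"ufer, so $R$ is not Pr\"ufer and hence of finite character. For $M\in Max(R)$ let $T^{(M)}$ denote the localization of $T$ at $R\setminus M$, an overring of $R_M$; then $|StarFC(T^{(M)})|\le|StarFC(R_M)|<\infty$ by finite star regularity of $R_M$, and by the reduction $T^{(M)}$ is one-dimensional of finite character whenever it is not a field, so Theorem~\ref{Th2} applies to it (and trivially when it is Pr\"ufer or a field). Since $\dim R=1$ and $T\ne L$, every maximal ideal of $T$ contracts to a maximal ideal of $R$, and $\{\mathfrak n\in Max(T):\mathfrak n\cap R=M\}$ is exactly $Max(T^{(M)})$; so, using the injectivity observation for $T$ together with Theorem~\ref{Th2} for each $T^{(M)}$,
\[
|StarFC(T)|\ \le\ \prod_{\mathfrak n\in Max(T)}|StarFC(T_{\mathfrak n})|\ =\ \prod_{M\in Max(R)}\ \prod_{\mathfrak n\cap R=M}|StarFC(T_{\mathfrak n})|\ =\ \prod_{M\in Max(R)}|StarFC(T^{(M)})|.
\]
Combining this with $|StarFC(T^{(M)})|\le|StarFC(R_M)|$ and with Theorem~\ref{Th2} for $R$,
\[
|StarFC(T)|\ \le\ \prod_{M\in Max(R)}|StarFC(R_M)|\ =\ |StarFC(R)|,
\]
which completes the proof.

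In summary, once the reduction (finite character of $R$ and of the relevant non-Pr\"ufer overrings) is in hand, both implications are a matter of re-packaging Theorem~\ref{Th2} fibre by fibre, together with the trivial injectivity of $*\mapsto(*(M))_M$; the whole difficulty is concentrated in that reduction, where the non-integral-closedness forced by finiteness of $StarFC(R)$ must be leveraged, and it is exactly this point that fails for Pr\"ufer (e.g.\ integrally closed) domains, consistently with the statement being vacuous there.
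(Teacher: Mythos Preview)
Your overall plan coincides with the paper's: both directions run through Theorem~\ref{Th2}, and your forward implication uses exactly the paper's construction $S\cap\bigcap_{N\ne M_0}R_N$ (the paper swaps the names $S$ and $T$ and phrases the argument as a proof by contradiction, but the computation is identical). The difference lies in how your ``reduction'' is handled. The paper does not leave finite character open, nor does it invoke failure of integral closure: it simply asserts that a one-dimensional quasi-Pr\"ufer domain---and hence each of its proper overrings, which are again one-dimensional quasi-Pr\"ufer---is $h$-local, and then quotes \cite[Theorem~22]{m2} to obtain $\bigl(\bigcap_{N\ne M_0}R_N\bigr)_{R\setminus M_0}=K$. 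In dimension one, $h$-local is exactly finite character, so this is precisely the structural input you were searching for; your speculation that ``the failure of integral closure must be exploited'' is not how the paper proceeds. Once you replace your ``granting this'' by that assertion, the two arguments agree.

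One minor point in your favour: your backward implication, which groups $Max(T)$ into fibres $T^{(M)}=T_{R\setminus M}$ and applies Theorem~\ref{Th2} fibre-by-fibre before comparing with $|StarFC(R_M)|$, is organized more carefully than the paper's single chain of inequalities, which passes directly from a product indexed by $N\in Max(T)$ to one indexed by $M\in Max(R)$. Your route makes transparent why several maximal ideals of $T$ lying over the same $M$ cause no trouble.
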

\begin{proof} Notice that each overring $T\subsetneqq qf(R)$ of $R$ is a one-dimensional quasi-Pr\"ufer domain.\\
$\Longleftarrow)$ Assume that $R_{M}$ is finitely star regular for every $M\in Max(R)$. Let $T$ be a proper overring of $R$. By Theorem~\ref{Th2}, $|StarFC(T)|=\displaystyle\prod_{N\in Max(T)}|StarFC(T_{N})|\leq \displaystyle\prod_{Q=N\cap R: N\in Max(T)}|StarFC(R_{Q})|\leq \displaystyle\prod_{M\in Max(R)}|StarFC(R_{M})|=|StarFC(R)|$. Thus $R$ is finitely star regular.\\
$\Longrightarrow)$ We mimic the proof of \cite[Theorem 1.5]{hmp4}. Assume that $R$ is finitely star regular and let $M$ be a maximal ideal of $R$ and suppose that there is an overring $T$ of $R_{M}$ such that $|StarFC(T)|>|StarFC(R_{M})|$. Set $B:=\displaystyle\bigcap_{Q\in Max(R)\setminus\{M\}} R_{Q}$ and $S=T \cap B$.  We first note that for every maximal ideal $N$ of $T$, $N\cap R_{M}=MR_{M}$ since $R_{M}$ is one-dimensional local domain. So $N\cap R=M$. But since $T$ is one-dimensional quasi-Pr\"ufer domain, $T$ is $h$-local and so $T$ has only finitely many maximal ideals, say $N_1, \ldots, N_r$.  Since $R$ is a one-dimensional quasi-Pr\"ufer domain, it is $h$-local, and hence $B_{R \setminus M} =K$ by \cite[Theorem 22]{m2}. This  yields $S_{R \setminus M} = T_{R \setminus M} \cap B_{R \setminus M}=T \cap K=T$.  It follows that $N_i \cap S \ne N_j \cap  S$ for $i \ne j$.  It is clear that if $Q$ is a maximal ideal of $S$ different from the $N_i \cap S$, then $Q \cap R=P$ for some maximal ideal $P$ of $R$ distinct from $M$ and hence $P=PR_ P\cap S$ and $S_Q=R_P$.  We then have $Max(S)=\{N_i \cap S \mid i=1, \ldots,n\} \cup \{PR_P \cap S \mid P\not= M\}$.  Therefore,
\begin{align} |StarFC(S)| &= \prod_{i=1}^n |StarFC(S_{Q_i \cap S})| \cdot \prod_{M \not=N} |StarFC(R_M)| \notag\\ &= \prod_{i=1}^n |StarFC(T_{Q_i})| \cdot \prod_{M \ne N} |StarFC(R_M)| \notag \\ &= |StarFC(T)| \cdot \prod_{M \ne N} |StarFC(R_M)| \notag \\ &> |StarFC(R_N)| \cdot \prod_{M \ne N} |StarFC(R_M)| \notag \\ &= |StarFC(R)|. \notag \end{align}  Therefore, $R$ is not star regular.
\end{proof}
\begin{corollary} Let $R$ be a domain with $StarFC(R)$ is finite and which satisfies one of the following conditions:\\
\1 Each proper overring of $R$ is Archimedean;\\
\2 Each proper valuation overring of $R$ satisfies the $accp$;\\
\3 Each proper overring of $R$ is a Mori domain.\\
Then $R$ is finitely star regular if and only if $R_{M}$ is finitely star regular for every maximal ideal $M$ of $R$.
\end{corollary}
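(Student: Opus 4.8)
The plan is to reduce every case to Theorem~\ref{Th3}, whose hypotheses are that $R$ be a one-dimensional quasi-Pr\"ufer domain with $StarFC(R)$ finite (the latter being assumed here). Two preliminary remarks make this possible. First, if $R$ is a Pr\"ufer domain the statement is trivial: every overring $T$ of $R$ and every localization $R_{M}$ is Pr\"ufer, so $|StarFC(T)|=|StarFC(R_{M})|=1$; hence $R$ is finitely star regular and each $R_{M}$ is finitely star regular, and both sides of the asserted equivalence hold. So assume $R$ is \emph{not} Pr\"ufer; in particular $R$ is not a valuation domain. Second, (3) implies (1): a Mori domain satisfies $accp$, and $accp$ implies the Archimedean property (if $a$ is a nonzero nonunit and $0\neq b\in\bigcap_{n\geq 1}a^{n}R$, writing $b=a^{n}c_{n}$ forces $c_{n}=ac_{n+1}$ and yields a strictly ascending chain $c_{1}R\subsetneq c_{2}R\subsetneq\cdots$). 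Thus it suffices to treat cases (1) and (2).

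The core claim is then: if $R$ is not Pr\"ufer and satisfies (1) or (2), then $R$ is one-dimensional and quasi-Pr\"ufer. For the dimension, I would argue that every proper valuation overring of $R$ is at most one-dimensional. Under (1), a proper valuation overring $V$ is Archimedean, but a valuation domain of dimension $\geq 2$ has a nonzero nonmaximal prime $P$, and for any nonunit $a\notin P$ one has $P\subseteq\bigcap_{n}a^{n}V\neq 0$, so $V$ is not Archimedean; under (2), $V$ satisfies $accp$, hence has value group $\mathbb{Z}$. Since $R$ is not a valuation domain, any prime chain of length $n$ in $R$ extends to a chain of length $\geq n$ in some \emph{proper} valuation overring of $R$; combined with the above this gives $\dim R\leq 1$, and $\dim R=1$ since $R$ is not a field. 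For quasi-Pr\"uferness I would apply (to $\overline R$, which now has dimension one) the fact that a domain whose proper valuation overrings are all at most one-dimensional has Pr\"ufer integral closure — equivalently, that a one-dimensional integrally closed \emph{local} domain which is not a valuation domain always admits a valuation overring of dimension $\geq 2$. Indeed, if $\overline R$ were not Pr\"ufer some localization $\overline R_{\mathfrak m}$ would be such a domain, producing a proper valuation overring of $R$ of dimension $\geq 2$ (hence neither Archimedean nor satisfying $accp$), contrary to the previous paragraph. Therefore $\overline R$ is Pr\"ufer and $R$ is one-dimensional quasi-Pr\"ufer, and since $StarFC(R)$ is finite, Theorem~\ref{Th3} yields that $R$ is finitely star regular if and only if $R_{M}$ is finitely star regular for every $M\in Max(R)$.

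The substantive step is the quasi-Pr\"ufer reduction: converting the dimension bound on valuation overrings into Pr\"uferness of $\overline R$. If one does not wish to quote this from the literature on Archimedean (respectively $accp$, Mori) overrings, its content is the construction, for a one-dimensional integrally closed local domain $B$ that is not a valuation domain, of a valuation overring of dimension $\geq 2$; this is carried out by a standard (if somewhat technical) valuation-theoretic argument starting from $B[u]$ for a suitable $u\in qf(R)$ with $u,u^{-1}\notin B$. Everything else — the reduction to the non-Pr\"ufer case, the implications Mori $\Rightarrow accp\Rightarrow$ Archimedean, the fact that $accp$-valuation domains are $DVR$s, and the behaviour of prime chains under passage to valuation overrings — is routine, and the main work has already been done in Theorems~\ref{Th2} and~\ref{Th3}.
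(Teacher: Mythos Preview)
Your approach is essentially the paper's: reduce to Theorem~\ref{Th3} by establishing that $R$ is either Pr\"ufer (the paper isolates the valuation case) or a one-dimensional quasi-Pr\"ufer domain, and then invoke that theorem. The only difference is packaging: the paper obtains the dichotomy in one stroke by citing \cite[Proposition~3.1(b), Lemma~3.2, Corollary~3.9]{BarDob}, whereas you unpack it into the implications Mori $\Rightarrow$ accp $\Rightarrow$ Archimedean, the bound $\dim V\le 1$ for proper valuation overrings, and the valuative-dimension step forcing the integral closure to be Pr\"ufer---which is precisely the content of the Barucci--Dobbs results the paper invokes. (Minor notational caution: in this paper $\overline{R}$ denotes the complete integral closure and $R'$ the integral closure, so your $\overline{R}$ should be read as $R'$.)
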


\begin{proof} By \cite[Proposition 3.1(b), Lemma 3.2 and Corollary 3.9]{BarDob}, either $R$ is a valuation domain or $dim(R')=1$ and $R'$ is Pr\"ufer. If $R=V$ is a valuation domain, then $R$ is finitely star regular. Assume that $dim(R')=1$ and $R'$ is Pr\"ufer. Then $R$ has the finite character and for each overring $T$ of $R$, either $T$ is a valuation or $dim(T)=1$ and $T'$ is Pr\"ufer. If $T$ is valuation, $|StarFC(T)|=1\leq |StarFC(R)|$. Assume that
$dim(T)=1$ and $T'$ is Pr\"ufer. By Theorem~\ref{Th2}, $|StarFC(T)|=\displaystyle\prod_{N\in Max(T)}|StarFC(T_{N})|\leq \displaystyle\prod_{Q=N\cap R, N\in Max(T)}|StarFC(R_{Q})|\leq \displaystyle\prod_{M\in Max(R)}|StarFC(R_{M})|=|StarFC(R)|$ as desired.
\end{proof}

\begin{corollary}(\cite[Theorem 1.5]{hmp4}) Let $R$ be a Noetherian domain with $Star(R)$ finite.\\
Then $R$ is star regular if and only if $R_{M}$ is star regular for every maximal ideal $M$ of $R$. 
\end{corollary}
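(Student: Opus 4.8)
\section*{Proof proposal}

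The plan is to deduce this from Theorem~\ref{Th3} by checking that, in the Noetherian setting, "star regular" and "finitely star regular" are the same notion. First recall that, since $R$ is a Noetherian domain (not a field) with $Star(R)$ finite, one has $dim(R)=1$ by \cite[Theorem 2.1]{hmp2}. By the Krull--Akizuki theorem, every overring $T$ of $R$ with $T\subsetneqq qf(R)$ is again a one-dimensional Noetherian domain; in particular, taking $T=R'$ shows that $R'$ is an integrally closed Noetherian domain of dimension one, hence a Dedekind domain, hence a Pr\"ufer domain. Therefore $R$ is a one-dimensional quasi-Pr\"ufer domain with $StarFC(R)$ finite, so Theorem~\ref{Th3} is applicable to $R$.

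Next I would use the elementary observation that in a Noetherian domain $A$ every fractional ideal is finitely generated, so that $E^{*_{f}}=E^{*}$ for every $*\in Star(A)$ and every $E\in F(A)$; hence $*=*_{f}$ and $Star(A)=StarFC(A)$. Applying this to $A=R$, to $A=R_{M}$ for each maximal ideal $M$ of $R$, and --- invoking Krull--Akizuki once more --- to $A=T$ for every overring $T\subsetneqq qf(R)$ of $R$ (and of each $R_{M}$), together with the triviality $|Star(qf(R))|=|StarFC(qf(R))|=1$, yields $|Star(T)|=|StarFC(T)|$ for all the overrings $T$ that occur. Consequently $R$ is star regular if and only if it is finitely star regular, and similarly $R_{M}$ is star regular if and only if it is finitely star regular, for every $M\in Max(R)$.

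Combining the two steps finishes the proof: by Theorem~\ref{Th3}, $R$ is finitely star regular if and only if $R_{M}$ is finitely star regular for every maximal ideal $M$ of $R$, and by the preceding paragraph the hypothesis "$R$ finitely star regular" may be replaced by "$R$ star regular" and each "$R_{M}$ finitely star regular" by "$R_{M}$ star regular". I do not expect a genuine obstacle here; the only point demanding some care is to make sure that the identification $Star=StarFC$ is valid for all the overrings that enter the definition of (finite) star regularity, not merely for $R$ and its localizations --- and this is precisely what the Krull--Akizuki theorem guarantees, since overrings of a one-dimensional Noetherian domain are one-dimensional Noetherian.
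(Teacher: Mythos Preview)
Your proposal is correct and follows essentially the same route as the paper: verify that $R$ is a one-dimensional quasi-Pr\"ufer domain with $StarFC(R)$ finite, invoke Theorem~\ref{Th3}, and use Noetherianity to identify $Star$ with $StarFC$ throughout. The only cosmetic differences are that the paper reaches ``$R'$ is Dedekind'' via Mori--Nagata (the integral closure of a Noetherian domain is Krull, hence Dedekind in dimension one) rather than Krull--Akizuki, and that the paper's proof is terser --- in particular you spell out more carefully than the paper does that the identification $Star=StarFC$ must hold for \emph{every} overring entering the definition of (finite) star regularity, which Krull--Akizuki indeed guarantees.
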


\begin{proof} Notice that $Star(R)=StarFC(R)$ and $dimR=1$. Since $R'=\overline{R}$ is a Krull domain, $R'$ is a Dedekind domain. The conclusion follows now from Theorem~\ref{Th3}.
\end{proof}

\section{pullback constructions}\label{PC}

Let $T$ be a domain, $M$ a maximal ideal of $T$, $K$ its residue field, $\phi:T\longrightarrow K$ the canonical surjection, $D$ a proper subring of $K$, and $k:=qf(D)$. Let $R$ be the pullback issued from the following diagram of canonical homomorphisms:
\[\begin{array}{cccl}
                    &R:=\phi^{-1}(D) & \longrightarrow                       & D\\
(\ \square\ )       &\downarrow         &                                       & \downarrow\\
                    &T                  & \stackrel{\phi}\longrightarrow     & K=T/M.
\end{array}\]
Clearly, $M=(R:T)$ and $D\cong R/M$. For ample details on the ideal structure of $R$ and its ring-theoretic properties, we refer the reader to \cite{ABDFK,AD,BG,BR,F,FG,GH}. The case where $T=V$ is a valuation domain is crucial and we will refer to this case as a classical diagram of type $(\square)$. Notice that for the classical diagram, if $I$ is a (fractional) ideal of $R$, then either $I=\phi^{-1}(E)$ for some $E\in F(D)$ if $M\subset I$, or $I$ is an ideal of $V$ or $I=a\phi^{-1}(E)$ for some $0\not=a\in M$ and $E$ a $D$-submodule of $K$ with $D\subseteq E\subset K$ if $I$ is not an ideal of $V$ (the proof similar to that of \cite[Theorem 2.1]{BG}). For more on star operations on pullbacks, see \cite{FP1, FP2}.
\begin{thm}\label{Pull.1} For the classical diagram of type$(\square)$, assume that $qf(D)=K$. Then $R$ is finitely star regular if and only if $D$ is finitely star regular.
\end{thm}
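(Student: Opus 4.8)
The plan is to reduce the statement to two facts: a complete description of the overrings of $R$, and the known behaviour of $StarFC(-)$ along the classical pullback. I would first record the overring dichotomy: if $S$ is an overring of $R$, i.e. $R\subseteq S\subseteq qf(R)=qf(V)$, then either $S\subseteq V$ or $V\subseteq S$. This is the one place the valuation on $V$ is used and where the hypothesis $qf(D)=K$ matters: if $x\in S\setminus V$ and $\nu$ denotes the valuation associated to $V$, then $\nu(x)<0$, so for every nonzero $v\in V$ one has $\nu(v/x)=\nu(v)-\nu(x)>0$, whence $v/x\in M\subseteq R\subseteq S$ and therefore $v=x\cdot(v/x)\in S$; thus $V\subseteq S$. (In a general pullback with $qf(D)\subsetneq K$ there are ``mixed'' overrings and no such dichotomy; the equality $qf(D)=K$ is exactly what rules them out.)

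Next I would split the two kinds of overrings. If $V\subseteq S$, then $S$ is an overring of the valuation domain $V$, hence itself a valuation domain, so $|StarFC(S)|=1$. If $S\subseteq V$, then $M=(R:V)\subseteq S$ and $MS\subseteq MV=M$, so $M$ is a common ideal of $R$ and $S$; consequently $S=\phi^{-1}(\phi(S))$, where $\phi(S)$ is a ring with $D=\phi(R)\subseteq\phi(S)\subseteq K=qf(D)$, i.e. $S=\phi^{-1}(D')$ for a unique overring $D'$ of $D$. Conversely, every overring $D'$ of $D$ satisfies $D'\subseteq qf(D)=K$, so $\phi^{-1}(D')$ is a well-defined overring of $R$ with $R\subseteq\phi^{-1}(D')\subseteq V$. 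Hence $[R,qf(R)]$ consists exactly of the rings $\phi^{-1}(D')$ with $D'$ an overring of $D$, together with the (valuation) overrings of $V$.

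The second ingredient is \cite[Theorem 4.4]{KadMi}: for any overring $D'$ of $D$ the pullback $\phi^{-1}(D')=D'+M$ is again a classical diagram of type $(\square)$ whose bottom ring $D'$ satisfies $qf(D')=K$, so $|StarFC(\phi^{-1}(D'))|=|StarFC(D')|$ (trivially so when $D'=K$, since then $\phi^{-1}(D')=V$ is a valuation domain); in particular $|StarFC(R)|=|StarFC(D)|$. Since the cited identity is realised by a bijection, it is valid even when the cardinalities are infinite; note also that if $|StarFC(D)|=\infty$ then $|StarFC(R)|=\infty$ and both $R$ and $D$ are vacuously finitely star regular, so no finiteness hypothesis is needed. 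With this in hand the equivalence is immediate. For $(\Leftarrow)$: if $D$ is finitely star regular and $S$ is an overring of $R$, then either $V\subseteq S$ and $|StarFC(S)|=1\le|StarFC(R)|$, or $S=\phi^{-1}(D')$ for an overring $D'$ of $D$ and $|StarFC(S)|=|StarFC(D')|\le|StarFC(D)|=|StarFC(R)|$; hence $R$ is finitely star regular. For $(\Rightarrow)$: if $R$ is finitely star regular and $D'$ is an overring of $D$, then $\phi^{-1}(D')$ is an overring of $R$, so $|StarFC(D')|=|StarFC(\phi^{-1}(D'))|\le|StarFC(R)|=|StarFC(D)|$; hence $D$ is finitely star regular.

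The step I expect to be the main obstacle is the first one: pinning down the overring structure of $R$ under the assumption $qf(D)=K$ — both the dichotomy and the identification of $[R,V]$ with $\{\phi^{-1}(D'):D'\text{ an overring of }D\}$ — since this is where the pullback-theoretic subtleties live. Everything afterwards is short bookkeeping on top of \cite[Theorem 4.4]{KadMi}, the only additional care being to invoke that theorem at each intermediate ring $\phi^{-1}(D')$, not merely at $R$ itself, and to track possibly infinite cardinalities.
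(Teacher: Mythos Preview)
Your proposal is correct and follows essentially the same route as the paper: split overrings of $R$ into those containing $V$ (valuation, so $|StarFC|=1$) and those contained in $V$ (which are the $\phi^{-1}(D')$ for $D'$ an overring of $D$), then apply \cite[Theorem~4.4]{KadMi} at each $\phi^{-1}(D')$ and in particular at $R$ itself. You supply more detail than the paper does on the overring dichotomy and the identification $[R,V]\leftrightarrow\{\text{overrings of }D\}$; one small expository correction is that the dichotomy ``$S\subseteq V$ or $V\subseteq S$'' holds for any classical pullback regardless of $qf(D)$, and the hypothesis $qf(D)=K$ enters precisely where you use it later, to ensure $\phi(S)\subseteq K=qf(D)$ is an overring of $D$.
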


\begin{proof} Assume that $D$ is finitely star regular and let $T$ be an overring of $R$. If $V\subseteq T$, then $T$ is a valuation domain and so $|StarFC(T)|=1\leq |StarFC(R)|$. Assume that $R\subseteq T\subsetneqq V$. Then $T=\phi^{-1}(D_{1})$ where $D_{1}$ is an overring of $D$. Now by \cite[Theorem 4.4]{KadMi}, $|StarFC(T)|=|StarFC(D_{1})|\leq |StarFC(D)|=|StarFC(R)|$, and therefore $R$ is finitely star regular.\\
Conversely, assume that $R$ is finitely star regular and let $D_{1}$ be an overring of $D$. Set $T=\phi^{-1}(D_{1})$. Then $T$ is an overring of $R$ and again by \cite[Theorem 4.4]{KadMi}, $|StarFC(D_{1})|=|StarFC(T)|\leq |StarFC(R)|=|StarFC(D)|$, and therefore $D$ is finitely star regular.
\end{proof}
\begin{example} Let $k$ be a field , $X$ an indeterminate over $k$ and set $D=k[[X^{3}, X^{4}, X^{5}]]$, $V=k((X))[[Y]]=k((X))+M$ and $R=D+M$.
Clearly $R$ is neither integrally closed nor Noetherian domain. By \cite[Theorem 3.8]{hmp2}, $|StarFC(D)|=|Star(D)|=3$.
Since the only overrings of $D$ are $D_{1}=k[[X^{2}, X^{3}]]$, $D_{2}=k[[X]]$ and $qf(D)=k((X))$ and since $D_{1}$ and $D_{2}$ are Noetherian divisorial domains, $D$ is finitely star regular. Hence $R$ is finitely star regular by Theorem~\ref{Pull.1}. In fact the only proper overrings of $R$ are $T_{1}=D_{1}+M$, $T_{2}=D_{2}+M$ and $V$. By \cite[Theorem 4.4]{KadMi}, $|StarFC(T_{1})|=|StarFC(T_{2})|=|StarFC(V)|=1$ while $|StarFC(R)|=3$.
\end{example}

Our next theorem deals with an important of class of finitely star regular domains that are not in general star regular. It shows that any $PVD$ is a finitely star regular domain. Recall from Hedstrom and Houston (\cite{HH}) that a domain $R$ is
pseudo-valuation domain if it is quasilocal and shares its maximal ideal
with a valuation domain which necessarily must contain $R$ and be unique.
In terms of pullbacks, according to \cite[Proposition 2.6]{AD}, $R$ is a pseudo-valuation domain if and only if there
is a valuation domain $V$ with maximal ideal $M$ and a subfield $k$ of $V/M=K$
such that $R$ is the pullback in the following diagram
\[\begin{array}{ccl}
R            & \longrightarrow                 & k\\
\downarrow   &                                 & \downarrow\\
V            & \stackrel{\phi}\longrightarrow  & K=V/M
\end{array}\]
Notice that a $PVD$ which is not a valuation domain is a $TV$-domain, that is, the $t$- and $v$-operations are the same (\cite[Proposition 4.3]{HoZ}). We start with the following useful lemma.
\begin{lemma}\label{PVD.1} Let $R$ be a $PVD$ (which is not a valuation domain), $V$ its associated valuation overring, $M$ its maximal ideal, $k=R/M$ and $K=V/M$. If $StarFC(R)$ is finite, then $K$ is algebraic over $k$.
\end{lemma}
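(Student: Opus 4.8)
The plan is to argue by contrapositive: assuming $K$ is transcendental over $k$, I will exhibit infinitely many distinct star operations of finite type on $R$, contradicting finiteness of $StarFC(R)$. The natural source of such operations is Lemma~\ref{L1}: for each intermediate ring $k \subseteq D_1 \subseteq K$ that is a field, the pullback $T_1 = \phi^{-1}(D_1)$ is an overring of $R$ with $(R:T_1) = M$, and the map $\delta(d_{T_1}, v)$ given by $E \mapsto ET_1 \cap E_v$ is a star operation on $R$; since $R$ is a $TV$-domain (so $t = v$) and $d_{T_1}$ is of finite type, this $\delta$ is actually of finite type. So it suffices to produce infinitely many intermediate fields $k \subseteq D_1 \subseteq K$ that induce pairwise distinct such star operations on $R$.

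The key step is to pick an element $x \in K$ transcendental over $k$ and consider the fields $D_\alpha = k(x^n : n \geq 1)$ versus smaller subfields, or more simply the distinct fields $k \subsetneq k(x) \subsetneq k(x^{1/2}) \subsetneq \cdots$ inside a suitable purely transcendental (or algebraic) tower sitting in $K$ — if $K$ has infinite transcendence degree over $k$ one uses $k \subsetneq k(x_1) \subsetneq k(x_1,x_2) \subsetneq \cdots$; if the transcendence degree is finite but positive, one uses an infinite ascending tower of subfields of $k(x)$, e.g. $k(x) \subsetneq k(x)(x^{1/2}) \subsetneq k(x)(x^{1/4}) \subsetneq \cdots$ inside an algebraic closure, intersected with $K$ if necessary, or distinct fields $k(x^2) \subsetneq k(x) $ type comparisons iterated. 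For distinct such fields $D_1 \ne D_2$ with corresponding overrings $T_1 \ne T_2$, I would separate $\delta(d_{T_1}, v)$ from $\delta(d_{T_2}, v)$ by evaluating them on a finitely generated ideal: taking $A = yR + zR$ for suitable $y, z \in M$ chosen so that $AT_1 \ne AT_2$ while $A_v$ does not swallow the difference. Concretely, for $0 \ne m \in M$ and $a \in V$ with $\phi(a) \in D_1 \setminus D_2$, the ideal $A = mR + maR$ satisfies $AT_i = m\phi^{-1}(D_i[\phi(a)])$-type computations, and one checks $A^{\delta(d_{T_1},v)} \ne A^{\delta(d_{T_2},v)}$ because membership of $ma$ distinguishes the two; the divisorial closure $A_v = A(R:A)^{-1}$ can be computed using the pullback description of ideals not contained in $V$.

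The main obstacle I anticipate is the bookkeeping to guarantee that the star operations $\delta(d_{T_1}, v)$ are genuinely pairwise distinct rather than collapsing — i.e. verifying that one can always find a finitely generated ideal $A$ of $R$ witnessing $AT_1 \neq AT_2$ in a way that survives intersection with $A_v$. This requires a careful analysis of how finitely generated ideals of $R$ extend to the pullback overrings $\phi^{-1}(D_i)$, for which the structure theorem for ideals in the classical diagram (quoted in the excerpt before Theorem~\ref{Pull.1}) is the right tool: an ideal of $R$ not comparable to $M$ has the form $a\phi^{-1}(E)$ with $D \subseteq E \subsetneq K$, and extending to $T_i$ replaces $E$ by $ED_i$. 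Once this is pinned down, choosing the $D_i$ so that the modules $E D_i$ are distinct (which is where transcendence of $K/k$ is used — it provides infinitely many intermediate fields with this separation property) finishes the argument. A secondary subtlety is confirming the $\delta$'s are of finite type, but that is immediate from the last sentence of Lemma~\ref{L1} together with $t = v$ on the $TV$-domain $R$.
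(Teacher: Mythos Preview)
Your overall strategy---argue by contrapositive, manufacture infinitely many overrings of $R$ inside $V$ via intermediate subrings of $K$, and apply Lemma~\ref{L1} to obtain pairwise distinct finite-type star operations $\delta(d_{T_i},t)$ on $R$---is exactly the paper's. The implementation, however, has two concrete problems, and the paper's choice of overrings sidesteps both.

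First, your tower $k(x)\subsetneq k(x)(x^{1/2})\subsetneq\cdots$ need not lie in $K$ at all (nothing forces $K$ to contain square roots), so that construction is not available. Second, and more seriously, your proposed separating ideal $A=mR+maR$ with $\phi(a)\in D_1\setminus D_2$ can fail to distinguish the two operations: one has $AT_i=m\phi^{-1}(D_i+D_i\phi(a))$ as a $D_i$-module (not $D_i[\phi(a)]$ as you wrote), and for the natural chain $D_1=k(x)\supsetneq D_2=k(x^2)$ with $\phi(a)=x$ one gets $D_2+D_2x=k(x)=D_1$, hence $AT_1=AT_2$ and $A^{\delta(d_{T_1},v)}=A^{\delta(d_{T_2},v)}$. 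So the very obstacle you flag is real, and your candidate ideal does not overcome it.

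The paper avoids all this bookkeeping by taking a single transcendental $\lambda\in K$ and using, instead of intermediate \emph{fields}, the overrings $T_P:=\phi^{-1}(k[\lambda]_P)$ indexed by the infinitely many nonzero primes $P$ of the PID $k[\lambda]$. The separating witness is the single fractional $R$-ideal $T:=\phi^{-1}(k[\lambda])$: since $(k:k[\lambda])=0$ one has $(R:T)=M$, hence $T_t=T_v=V$, and then $T^{\delta(d_{T_P},t)}=TT_P\cap V=T_P$. Distinct primes yield distinct $T_P$, so all the $\delta(d_{T_P},t)$ are pairwise distinct in one line. Your approach is repairable (for a strict chain $D_1\supsetneq D_2\supsetneq\cdots$ of subfields of $k(x)$ one can test on the fractional ideal $T_j=\phi^{-1}(D_j)$ itself rather than on a two-generated ideal), but the paper's device of localizing a polynomial ring is both cleaner and immune to the collapsing you worried about.
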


\begin{proof} Assume that $StarFC(R)$ is finite and suppose that $K$ is transcendental over $k$. Let $\lambda\in K$ transcendental over $k$ and set $T=\phi^{-1}(k[\lambda])$. Since $(k:k[\lambda])=(0)$, 
$T^{-1}=(R:T)=\phi^{-1}(k:k[\lambda])=\phi^{-1}(0)=M$, and so $T_{t}=T_{v}=V$. Now, for every nonzero prime ideals $p\not=q$ of $k[\lambda]$, set $T_{P}=\phi^{-1}(k[\lambda]_{P})$ and $T_{q}=\phi^{-1}(k[\lambda]_{q})$. By Lemma~\ref{L1}, $\delta(d_{T_{P}}, t)$ and $\delta(d_{T_{q}}, t)$ are star operations on $R$ of finite type and 
$T^{\delta(d_{T_{P}}, t)}=TT_{P}\cap T_{t}=T_{P}\cap V=T_{P}$ and $T^{\delta(d_{T_{q}}, t)}=TT_{q}\cap T_{t}=T_{q}\cap V=T_{q}$. Thus $\delta(d_{T_{P}}, t)\not=\delta(d_{T_{q}}, t)$. As $Spec((k[\lambda])$ is inifinte, $starFC(R)$ would be infinite, which is absurd. It follows that $K$ is algebraic over $k$.
\end{proof}
\begin{thm}\label{Pull.2} Any $PVD$ is finitely star regular.
\end{thm}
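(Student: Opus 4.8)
The plan is to clear the trivial cases, classify the overrings of a $PVD$, and then reduce the only nontrivial situation to a conductor/divisorial computation patterned on the proof of Theorem~\ref{Th1}. If $R$ is a valuation domain, every overring of $R$ is again a valuation domain, so $|StarFC(T)|=1=|StarFC(R)|$ for each overring $T$; and if $|StarFC(R)|=\infty$ there is nothing to prove. So assume $R$ is a $PVD$ that is not a valuation domain and that $StarFC(R)$ is finite. Write $R=\phi^{-1}(k)$ for the $PVD$ pullback over a valuation domain $V$ with maximal ideal $M$ and residue field $K=V/M$, so that $R=k+M$ and $M=(R:V)\subseteq R$. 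By Lemma~\ref{PVD.1}, $K$ is algebraic over $k$, hence every ring between $k$ and $K$ is a field.

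First I would describe the overrings of $R$. If $T$ is an overring of $R$ and $x\in T\setminus V$, then (since $V$ is a valuation domain with quotient field $qf(R)$) $x^{-1}$ lies in $M\subseteq R\subseteq T$, so for every $v\in V$ we have $x^{-1}v\in M\subseteq T$ and therefore $v=x(x^{-1}v)\in T$; thus $V\subseteq T$. Consequently every overring $T$ of $R$ satisfies $V\subseteq T$ or $R\subseteq T\subseteq V$. In the first case $T$ is an overring of the valuation domain $V$, so $T$ is a valuation domain and $|StarFC(T)|=1\le|StarFC(R)|$. In the second case the pullback correspondence gives $T=\phi^{-1}(D_1)$ for a (necessarily field) $D_1$ with $k\subseteq D_1\subseteq K$; discarding the already handled cases $T=R$ and $T=V$, the ring $T$ is itself a $PVD$ sharing $V$ and $M$, with residue field $D_1$.

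It then remains to prove $|StarFC(T)|\le|StarFC(R)|$ when $R\subseteq T\subsetneq V$, and I would do this by showing that $\delta(-,t_R)\colon StarFC(T)\to StarFC(R)$, $*'\mapsto\delta(*',t_R)$ — which is well defined and lands in $StarFC(R)$ by Lemma~\ref{L1} — is injective. Given $*_1'\ne *_2'$ in $StarFC(T)$, pick a finitely generated fractional ideal with distinct images; since every star operation fixes the ring and hence every principal ideal, after scaling by suitable nonzero elements we obtain a non-principal integral ideal $A\subseteq M$ of $T$ with $A^{*_1'}\ne A^{*_2'}$. Choose a finitely generated ideal $B\subseteq M$ of $R$ with $BT=A$; then $B$ is non-principal as well. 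The computational core is the identity $A_{v_T}=B_{v_R}$: writing $BV=a_1V$ for a generator $a_1$ of $B$ of minimal value (so $a_1\in M$), one checks $B=a_1\phi^{-1}(E)$ and $A=a_1\phi^{-1}(ED_1)$ for a finite-dimensional $k$-subspace $E$ of $K$ with $k\subsetneq E$, and then the conductor identity $(R:\phi^{-1}(E))=M$ for any $k\subsetneq E\subseteq K$ (and its analogue over $T$), together with $M^{-1}=(M:M)=V$ computed in both $R$ and $T$, yields $B_{v_R}=a_1V=A_{v_T}$. Since $A^{*_i'}\subseteq A_{v_T}=B_{v_R}$, Lemma~\ref{L1} gives $B^{\delta(*_i',t_R)}=(BT)^{*_i'}\cap B_{t_R}=A^{*_i'}$, so $\delta(*_1',t_R)\ne\delta(*_2',t_R)$. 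Hence $|StarFC(T)|\le|StarFC(R)|$ in this case too, and $R$ is finitely star regular.

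I expect the main obstacle to be the computational core of the last step: establishing the structure $B=a_1\phi^{-1}(E)$ of a non-principal finitely generated ideal of the $PVD$ contained in $M$, and the conductor identity $(R:\phi^{-1}(E))=M$ for $k\subsetneq E\subseteq K$ (with the matching statement over $T$), so that $v$-closure of such an ideal collapses to the principal $V$-ideal $a_1V$ on both sides. Everything else is either an elementary reduction or a direct invocation of Lemma~\ref{L1}.
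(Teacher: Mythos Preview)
Your proposal is correct and follows essentially the same approach as the paper: reduce to the nontrivial intermediate overrings $R\subsetneq T\subsetneq V$ (which are again $PVD$'s with residue field a subfield $F$ of $K$ by Lemma~\ref{PVD.1}), and prove that $\delta(-,t)\colon StarFC(T)\to StarFC(R)$ is injective by showing that for a suitable finitely generated $B\subseteq M$ with $BT=A$ one has $B_{t}=B_{v}=A_{v_T}=aV$, so that $B^{\delta(*,t)}=A^{*}$. Your write-up is in fact more explicit than the paper's on two points the paper leaves to the reader: the dichotomy that every overring of a $PVD$ either contains $V$ or lies in $[R,V]$, and the conductor computation $(R:\phi^{-1}(E))=M$ (and its $T$-analogue) yielding $B_{v}=aV$.
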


\begin{proof} First, if $R$ is a valuation domain, then for every overring $T$ of $R$, $|StarFC(T)|=|StarFC(R)|=1$. So, without loss of generality, we may assume that $R$ is not a valuation domain and $StarFC(R)$ is finite. Let $V$ be the associated valuation of $R$, $M$ its maximal ideal, $k=R/M$ and $K=V/M$. By Lemma~\ref{PVD.1}, $K$ is algebraic over $k$. Now, let $T$ be a proper overring of $R$. If $V\subseteq T$, then $T$ is a valuation domain and so $|StarFC(T)|=1\leq |StarFC(R)|$, as desired. Assume that $R\subsetneq T\subsetneq V$. Then $T=\phi^{-1}(F)$ where $F$ is a subfield of $K$ with $k\subsetneqq F\subsetneqq K$. We claim that $\delta(-, t): StarFC(T)\longrightarrow StarFC(R), *\mapsto \delta(*, t)$ is a one-to-one map. Indeed, let $*\not=*'\in StarFC(T)$ and let $A$ be a finitely generated integral ideal of $T$ such that $A^{*}\not=A^{*'}$. Necessarily $A\subsetneqq M$ and $A$ is not an ideal of $V$. Then $A=a\phi^{-1}(W)$ where $F\subsetneqq W\subsetneqq K$ is a finite dimensional $k$-subspace of $K$. Set $W=\displaystyle\sum_{i=1}^{i=r}F\lambda_{i}$ and let $H=\displaystyle\sum_{i=1}^{i=r}k\lambda_{i}$ and $B=a\phi^{-1}(H)$. Then $B$ is a finitely generated ideal of $R$, $BT=A$ and $B_{t}=B_{v}=A_{v_{T}}=aV$. Thus $B^{\delta(*, t)}=(BT)^{*}\cap B_{t}=A^{*}\cap A_{v_{T}}=A^{*}$. Similarly,$B^{\delta(*', t)}=(BT)^{*'}\cap B_{t}=A^{*'}\cap A_{v_{T}}=A^{*'}$. Hence $B^{\delta(*, t)}\not= B^{\delta(*', t)}$ and therefore $\delta(-, t)$ is one-to-one. It follows that $|StarFC(T)|\leq |StarFC(R)|$ and therefore $R$ is finitely star regular.\\
\end{proof}
Notice that a $PVD$ is not necessarily a star regular domain as shown by the following example.\\

\begin{example} Let $V$ be a valuation domain with a principal maximal ideal $M$ and a non-maximal prime ideal $P$ such that $P=P^{2}$. Suppose that $K=V/M$ is a quadratic extension of a field $k$ and let $R$ be the $PVD$ arising from the diagram:\\
\[\begin{array}{cccl}
                    &R:=\phi^{-1}(k) & \longrightarrow                       & k\\
(\ \square\ )       &\downarrow         &                                       & \downarrow\\
                    &V                  & \stackrel{\phi}\longrightarrow     & K=V/M.
\end{array}\]
Since $[K:k]=2$, $R$ is a divisorial domain and so $|Star(R)|=1$. However, $V_{P}$ is an overring of $R$ and $|Star(V_{P})|=2$. Hence $R$ is not star regular.
\end{example}
\begin{example} Let $k=\mathbb{Z}_{2}$ and $K$ an extension of $k$ with $[K:k]=4$ (for instance, let $x$ be a root of the irreducible polynomial $f(Y)=Y^{4}+Y^{3}+1\in k[Y]$ and $K=k(x)$). Let $X$ be an indeterminate over $k$ and set $V=K[[X]]=K+M$ and $R=k+M$. Let $T$ be a proper overring of $R$. If $V\subseteq T$, $T$ is a valuation domain and so $|StarFC(T)|=1\leq |StarFC(R)|$. If $R\subsetneqq T\subsetneqq V$, then $T=\phi^{-1}(F)$ where $k\subsetneqq F\subsetneqq K$ is a subfield of $K$. Necessarily $[K:F]=2$ and so $T$ is a divisorial $PVD$. Hence $|StarFC(T)|=|Star(T)|=1\leq |Star(R)|=|StarFC(R)|=9$ by \cite{Par1}.
\end{example}
Recall that for the general pullback of type $(\square)$, every star operation $*$ on $R$ induces a star operation $*_{\phi}$on $D$ defined by $J^{*_{\phi}}=\phi((\phi^{-1}(J))^{*})$ for every $J\in F(D)$ (\cite[Proposition 2.7 and Proposition 2.6]{FP1}. In this context, it is easy to check that if $*$ is of finite type on $R$, then $*_{\phi}$ on $D$ is of finite type on $D$.

\begin{thm} For the classical pullback diagram of type $(\square)$, let $T=\phi^{-1}(D_{1})$ be an overring of $R$. Then $|StarFC(T)|\leq |StarFC(D_{1})||StarFC(R)|$. In particular, if $|StarFC(D_{1})|=1$ for every $D_{1}\in [D, K]$, then $R$ is finitely star regular.
\end{thm}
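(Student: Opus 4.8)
\section*{Proof proposal for the final theorem}

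The plan is to build an explicit one-to-one map
\[
\Psi\colon StarFC(T)\longrightarrow StarFC(D_{1})\times StarFC(R),\qquad *\longmapsto \bigl(*_{\phi},\,\delta(*,t)\bigr),
\]
where $*_{\phi}\in StarFC(D_{1})$ is the finite-type star operation induced on $D_{1}$ by the classical pullback $T=\phi^{-1}(D_{1})$, $V\to K$ (the construction recalled just above, applied with $D_{1}$ in place of $D$), and $\delta(*,t)\in StarFC(R)$ is the operation furnished by Lemma~\ref{L1}. Injectivity of $\Psi$ gives $|StarFC(T)|\leq |StarFC(D_{1})|\,|StarFC(R)|$ as an inequality of cardinals, needing no finiteness hypothesis. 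We may assume $D_{1}\subsetneqq K$, i.e.\ $T\subsetneqq V$, since otherwise $T=V$ is a valuation domain and the inequality is trivial. The ``in particular'' clause follows: every overring $S$ of $R$ is comparable to $V$ (if $s\in S\setminus V$ then $s^{-1}\in M\subseteq S$, so $s^{-1}V\subseteq MV=M\subseteq S$ and hence $V=s(s^{-1}V)\subseteq S$); if $V\subseteq S$ then $S$ is a valuation domain or a field with $|StarFC(S)|=1\leq |StarFC(R)|$, and if $R\subseteq S\subsetneqq V$ then $S=\phi^{-1}(D_{1})$ for some $D_{1}\in[D,K]$, so $|StarFC(S)|\leq |StarFC(D_{1})|\,|StarFC(R)|=|StarFC(R)|$ once $|StarFC(D_{1})|=1$.

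For injectivity, suppose $*_{1},*_{2}\in StarFC(T)$ satisfy $(*_{1})_{\phi}=(*_{2})_{\phi}$ and $\delta(*_{1},t)=\delta(*_{2},t)$; since a finite-type star operation is determined by its values on finitely generated integral ideals, it suffices to prove $A^{*_{1}}=A^{*_{2}}$ for each such ideal $A$ of $T$. I first record the conductor identities $(T:V)=M$, $(T:M)=V$ and $(R:M)=V$ (a short case split according to whether $M$ is principal in $V$), which make $V$ and $M$ divisorial fractional ideals of $T$. Now split $A$ using the pullback description of ideals of $T$ (the analogue for $T=\phi^{-1}(D_{1})$ of the structure statement recalled before Theorem~\ref{Pull.1}). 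If $M\subsetneqq A$, then $A=\phi^{-1}(J)$ with $0\neq J\in F(D_{1})$, so $A^{*}\supseteq M$ and $A^{*}=\phi^{-1}(\phi(A^{*}))=\phi^{-1}(J^{*_{\phi}})$ is forced by $(*_{1})_{\phi}=(*_{2})_{\phi}$. If $A$ is an ideal of $V$ (in particular if $A=M$), it is principal over $V$, say $A=xV$, and since $V_{v_{T}}=(T:M)=V$ we get $A_{v_{T}}=xV_{v_{T}}=xV=A$, whence $A^{*_{1}}=A=A^{*_{2}}$ because every star operation is $\leq v_{T}$. Otherwise $A=a\phi^{-1}(E)$ with $0\neq a\in M$ and $E$ a finitely generated $D_{1}$-submodule of $K$ satisfying $D_{1}\subseteq E\subsetneqq K$. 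If $E\subseteq k:=qf(D_{1})$, then $E\in F(D_{1})$, $\phi^{-1}(E)\supseteq M$, and $A^{*}=a\phi^{-1}(E^{*_{\phi}})$ is again forced by $(*_{1})_{\phi}=(*_{2})_{\phi}$. If $E\not\subseteq k$, choose generators $\lambda_{1}=1,\lambda_{2},\dots,\lambda_{r}$ of $E$ over $D_{1}$, set $H=\sum_{i}D\lambda_{i}$ and $B=a\phi^{-1}(H)$, a finitely generated fractional ideal of $R$ with $BT=A$; the hypothesis $E\not\subseteq k$ forces $(T:\phi^{-1}(E))=M$ and $(R:\phi^{-1}(H))=M$ (any transporter element automatically lies in $T$, resp.\ $R$, and a nonzero residue would carry some $\lambda_{j}\notin k$ out of $k$), so $A_{v_{T}}=aV$ and $B_{t}=B_{v}=aV$. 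Hence $B^{\delta(*,t)}=(BT)^{*}\cap B_{t}=A^{*}\cap aV=A^{*}$ (using $A^{*}\subseteq A_{v_{T}}=aV$), and $\delta(*_{1},t)=\delta(*_{2},t)$ gives $A^{*_{1}}=A^{*_{2}}$.

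The steps I am deferring are routine: the conductor identities above, the computation $\phi^{-1}(H)T=\phi^{-1}(HD_{1})$ and hence $BT=A$, and the finite generation of $\phi^{-1}(H)$ over $R$ — all standard pullback bookkeeping in the style of \cite[Theorem 2.1]{BG}. The genuine obstacle is the last case, $A=a\phi^{-1}(E)$ with $E\not\subseteq k$: one must manufacture a companion ideal $B$ of $R$ that is simultaneously finitely generated over $R$, satisfies $BT=A$, and has $v$-closure $B_{t}=aV$ large enough to absorb $A^{*}$ inside $\delta(*,t)$. This is exactly the device already exploited in the proof of Theorem~\ref{Pull.2}; the only new point is that the companion conductor computation must be verified under the weaker structural assumption $D_{1}\subseteq E\subsetneqq K$ with $E\not\subseteq qf(D_{1})$, rather than in the more special setting that occurs there.
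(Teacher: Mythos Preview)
Your proof is correct and follows essentially the same strategy as the paper: both build the map $*\mapsto(*_{\phi},\delta(*,t))$ from $StarFC(T)$ to $StarFC(D_{1})\times StarFC(R)$ and prove injectivity via the same case analysis on a finitely generated ideal $A$ of $T$ (the cases $M\subsetneqq A$, $A\subseteq M$ with $E\subseteq qf(D_{1})$, and $A\subseteq M$ with $E\not\subseteq qf(D_{1})$, the last handled by the companion ideal $B=a\phi^{-1}(\sum D\lambda_{i})$). Your contrapositive framing, your explicit treatment of the case where $A$ is already a $V$-ideal (which the paper dispatches with the bare phrase ``necessarily $A$ is not an ideal of $V$''), and your justification that every overring of $R$ is comparable to $V$ are welcome additions, but the argument is the same.
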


\begin{proof} Set $T=\phi^{-1}(D_{1})$ and consider the map $\delta: StarFC(T)\longrightarrow StarFC(D_{1})\times StarFC(R)$,\\
$*\mapsto (*_{\phi}, \delta (*, t))$. We claim that $\delta$ is one-to-one. Indeed, let $*\not=*'\in StarFC(T)$ and let $A$ be a finitely generated integral ideal of $T$ such that $A^{*}\not=A^{*'}$. Necessarily $A$ is not an ideal of $V$. If $M\subsetneqq A$, then $A=\phi^{-1}(J)$ for some finitely ideal $J$ of $D_{1}$. In this case $A^{*}=\phi^{-1}(J^{*_{\phi}})$ and $A^{*'}=\phi^{-1}(J^{*'_{\phi}})$. Thus $J^{*_{\phi}}\not=J^{*'_{\phi}}$ and so $*_{\phi}\not=*'_{\phi}$. Assume that $A\subsetneqq M$ and set $A=a\phi^{-1}(W)$ where $D_{1}\subsetneqq W\subsetneqq K$ is a finitely generated $D_{1}$-module. If $(D_{1}:W)\not=0$, then $W\subseteq qf(D_{1})$ and so $W$ would be a finitely generated fractional ideal of $D_{1}$. Thus $A^{*}=a\phi^{-1}(W^{*_{\phi}})$ and $A^{*'}=a\phi^{-1}(W^{*'_{\phi}})$. Thus $W^{*_{\phi}}\not=W^{*'_{\phi}}$ and so $*_{\phi}\not=*'_{\phi}$. Assume that $(D_{1}:W)=0$. Set $W=\displaystyle\sum_{i=1}^{i=r}D_{1}\lambda_{i}$ and let $H=\displaystyle\sum_{i=1}^{i=r}D\lambda_{i}$ and $B=a\phi^{-1}(H)$. Then $B$ is a finitely generated ideal of $R$, $BT=A$ and $B_{t}=B_{v}=A_{v_{T}}=aV$. Thus $B^{\delta(*, t)}=(BT)^{*}\cap B_{t}=A^{*}\cap A_{v_{T}}=A^{*}$. Similarly,$B^{\delta(*', t)}=(BT)^{*'}\cap B_{t}=A^{*'}\cap A_{v_{T}}=A^{*'}$. Hence $B^{\delta(*, t)}\not= B^{\delta(*', t)}$. Thus $\delta(*)\not=\delta(*')$ and hence $\delta$ is one-to-one. It follows that $|StarFC(T)|\leq |StarFC(D_{1}||StarFC(R)|$.\\
Now assume that $|StarFC(D_{1}|=1$ for every $D_{1}\in [D, K]$ and let $T$ be an overring of $R$. If $V\subseteq T$, then $|StarFC(T)|=1\leq |StarFC(R)$. Let $T$ be a proper overring of $R$. Assume that $R\subsetneqq T\subsetneq V$ and $T=\phi^{-1}(D_{1})$ where $D\subsetneqq D_{1}\subsetneqq K$. Then $|StarFC(T)|\leq |StarFC(D_{1})||StarFC(R)|=|StarFC(R)|$ and therefore $R$ is finitely star regular.
\end{proof}
\begin{example} Let $\mathbb{Q}$ be the field of rational numbers, and $X$ and $Y$ indeterminates over $\mathbb{Q}$. Set $D=\mathbb{Q}[[X^{2}, X^{3}]]$, $V=\mathbb{Q}(\sqrt{2})((X))[[Y]]=\mathbb{Q}(\sqrt{2})((X))+M$ and $R=D+M$. Clearly $[D, K]=\{D, \mathbb{Q}[[X]], \mathbb{Q}((X)), \mathbb{Q}(\sqrt{2})[[X^{2}, X^{3}]], \mathbb{Q}(\sqrt{2})[[X]], K\}$ and every $D_{1}\in [D, K]$ is divisorial. Thus $R$ is finitely star regular.
\end{example}


\end{document}